%\pdfoutput=1 %%% Forces the arXiv to use pdflatex

%%% Please note -- the comments in this TeX file are part of the
%%% paper.  However, when citing such material please cite the TeX
%%% source for the paper (which is available from arXiv.org) and not
%%% the paper itself.
\documentclass[12pt]{amsart}

\usepackage[left=1.4in,right=1.4in,top=0.75in,bottom=0.75in]{geometry}
\usepackage[T1]{fontenc}
\usepackage{fourier}

\usepackage{graphicx}

%\usepackage{subcaption}
%\usepackage{fullpage}
%\usepackage{epsfig}
%\usepackage{mathptmx}
%\addtolength{\footskip}{1cm}

\usepackage{amsthm}
\usepackage{amsmath}
\usepackage{amssymb}
\usepackage{pinlabel}

\usepackage{microtype}

\usepackage{caption}
\captionsetup[wrapfigure]{margin=10pt, font=small}
\usepackage{wrapfig}

\usepackage[hidelinks,pagebackref]{hyperref}

\renewcommand*{\backref}[1]{}
\renewcommand*{\backrefalt}[4]{
  \ifcase #1
  [No citations.]
  \or [#2]
  \else [#2]
  \fi }

\makeatletter
\let\c@equation\c@subsection
\makeatother
\numberwithin{equation}{section}

\makeatletter
\let\c@figure\c@equation
\makeatother
\numberwithin{figure}{section}

%\swapnumbers
\theoremstyle{plain}
\newtheorem{theorem}[equation]{Theorem}

\newtheorem{lemma}[equation]{Lemma}

\newtheorem{proposition}[equation]{Proposition}

\newtheorem{criterion}[equation]{Criterion}

\theoremstyle{definition}
\newtheorem{definition}[equation]{Definition}
\newtheorem{remark}[equation]{Remark}

\newtheorem*{claim*}{Claim}

\newcommand{\refthm}[1]{Theorem~\ref{Thm:#1}}
\newcommand{\reflem}[1]{Lemma~\ref{Lem:#1}}
\newcommand{\refprop}[1]{Proposition~\ref{Prop:#1}}

\newcommand{\reffig}[1]{Figure~\ref{Fig:#1}}
\newcommand{\refcri}[1]{Criterion~\ref{Cri:#1}}
\newcommand{\refdef}[1]{Definition~\ref{Def:#1}}
\newcommand{\refsec}[1]{Section~\ref{Sec:#1}}

%%% The code below was copied from
%%% http://groups.google.com/group/comp.text.tex/browse_frm/
%%% thread/97fefc09a54fa5df/49f1ad7d5cefd902#49f1ad7d5cefd902

\newcommand{\fakeenv}{} %%% prints the emptystring
\newenvironment{restate}[2]
{
 \renewcommand{\fakeenv}{#2} %%% So now \fakeenv prints #2
 \theoremstyle{plain}
 \newtheorem*{\fakeenv}{#1~\ref{#2}}
    %%% so now #2 is the name of a theorem-like environment.
 \begin{\fakeenv}
}
{
 \end{\fakeenv}
}

%%% For definition style

%%% New commands
\newcommand{\st}{\mathbin{\mid}} %%% \mathbin = binary operator
\newcommand{\from}{\colon} % As in ``f maps _from_ X _to_ Y''.
\newcommand{\cross}{\times}

\newcommand{\RR}{\mathbb{R}}

\newcommand{\calB}{\mathcal{B}}
\newcommand{\calC}{\mathcal{C}}

\newcommand{\bdy}{\partial}
\newcommand{\carr}{\prec}

\newcommand{\eff}{\dashv}

\newcommand{\supp}{\operatorname{supp}}
\newcommand{\op}{{\operatorname{op}}}
\newcommand{\ML}{\operatorname{ML}}

\newcommand{\ind}{\operatorname{index}}
\newcommand{\interior}{\operatorname{interior}}

%%% Constants

\newcommand{\Hyp}{{\sf \delta}}
\newcommand{\DiamV}{{\sf K_0}}
\newcommand{\Quasi}{{\sf K_1}}

%%% vectors

%%% Notation - 
%%% Natural numbers - i, j, m, n
%%% Points of S - p, q, r, x, y, z
%%% Simple closed curves - alpha, beta, gamma
%%% Diagonals/arcs - delta, epsilon
%%% Branches - A, B, C
%%% Ties - I, J 
%%% Switch - s 
%%% Tangent vector - V(x, \alpha)
%%% Tracks - sigma, tau
%%% Vertical boundary - u, v, w
%%% Transverse measure - mu
%%% Regions - A (annulus), B (bigon), H (hexagon) L, P, Q, R (rectangle), S (surface)
%%% Neighborhood - N (closed), n (open)
%%% Sets of curves - A, B, \calC, V (vertex cycles)

%%% Terminology
%%% topology - disk, peripheral annulus, pants
%%% geometry - ideal hexagon, once-holed ideal monogon, once-holed
%%%            ideal bigon 

\title{The curves not carried}

\author[Gadre]{Vaibhav Gadre}
\address{\hskip-\parindent
    	Department of Mathematics\\
        University of Warwick\\
        Coventry, UK}
\email{v.gadre@warwick.ac.uk}

\author[Schleimer]{Saul Schleimer}
\address{\hskip-\parindent
    	Department of Mathematics\\
        University of Warwick\\
        Coventry, UK}
\email{s.schleimer@warwick.ac.uk}

\thanks{This work is in the public domain.}  

\thanks{The first author was supported by a Global Research Fellowship
  from the Institute of Advanced Study at the University of Warwick.}

\begin{document}

%%% As simple as possible --
%%%   short sentences, minimize use of math mode, no \cites.
\begin{abstract}
Suppose $\tau$ is a train track on a surface $S$.  Let $\calC(\tau)$
be the set of isotopy classes of simple closed curves carried by
$\tau$.  Masur and Minsky [2004] prove that $\calC(\tau)$ is
quasi-convex inside the curve complex $\calC(S)$.  We prove that the
complement, $\calC(S) - \calC(\tau)$, is quasi-convex.
\end{abstract}

%%% AMS MSC - 57M99 low-dim topology, 30F60 - Teichmuller theory,
%%% 20F65 - Geometric group theory

%%% Key words - train tracks, curve complex, quasi-convex

\maketitle

\section{Introduction}

The curve complex $\calC(S)$, of a surface $S$, is deeply important in
low-dimensional topology.  One foundational result, due to Masur and
Minsky, states that $\calC(S)$ is Gromov
hyperbolic~\cite[Theorem~1.1]{MasurMinsky99}.

Suppose $\tau$ is a train track on $S$.  The set $\calC(\tau) \subset
\calC(S)$ consists of all curves $\alpha$ carried by $\tau$: we write
this as $\alpha \carr \tau$.  Another striking result of Masur and
Minsky is that $\calC(\tau)$ is quasi-convex in $\calC(S)$.  This
follows from hyperbolicity and their result that splitting sequences
of train tracks give rise to quasi-convex subsets in
$\calC(S)$~\cite[Theorem~1.3]{MasurMinsky04}.

We prove a complementary result.

\begin{restate}{Theorem}{Thm:QuasiConvexGS}
Suppose $\tau \subset S$ is a train track.  Then the curves not
carried by $\tau$ form a quasi-convex subset of $\calC(S)$.
\end{restate}

This supports the intuition that, for a maximal birecurrent track
$\tau$, the carried set $\calC(\tau)$ is like a half-space in a
hyperbolic space.  
%%% So, it is unlike a horoball.

When $S$ is the four-holed sphere or once-holed torus the proof is an
exercise in understanding how $\calC(\tau)$ sits inside the Farey
graph.
%%% Solution for $S_{1,1}$: any complete track in S = S_{1,1} has two
%%% switches and three branches, one large and two small.  There are
%%% exactly two carried curves \alpha and \alpha' that are not fully
%%% carried, and these are the vertex cycles.  Note that i(\alpha,
%%% \alpha') = 1 so they are adjacent in the Farey graph.  The
%%% laminations carried by $\tau$ form an interval in PML with
%%% endpoints \alpha and \alpha'.  The laminations not carried is the
%%% complement.  The curves not carried lie in this interval - they
%%% form a half-space in the Farey graph.  This proves
%%% \refthm{QuasiConvexGS}.  Note that there is exactly one curve
%%% \beta, not carried by \tau, meeting \alpha and \alpha' each in a
%%% single point.  This proves \refprop{Face}.  Note that a non-empty,
%%% non-complete track \tau in S is necessarily a simple loop.  This
%%% can be folded (in infinitely many different ways!) to obtain a
%%% complete track.  This gives \refprop{Fold}.  
%%% 
%%% Solution for S = S_{0,4}: Let v, w be vertex cycles of \tau (there
%%% are at least two!).  If supp(v) is a simple closed curve then S -
%%% supp(v) = P_0 \cap P_1, both pants.  So w gives a wave in each of
%%% these and we have found all of \tau.  If supp(v) is a barbell then
%%% S - supp(s) is a pair of monogons and a pants, so w is even
%%% simpler.  In each case we have to show that there are no more
%%% vertices.  This done, we have pv + qw gives all of the carried
%%% curves, and we are done.  There is probably an argument using the
%%% fact that S_{1,1} is a double branched cover of S_{0,4}.
In what follows we suppose that $S$ is a connected, compact, oriented
surface with $\chi(S) \leq -2$, and not a four-holed sphere.  Here is
a rough sketch of the proof of \refthm{QuasiConvexGS}.  Suppose
$\gamma$ and $\gamma'$ are simple closed curves, not carried by
$\tau$.  Let $[\gamma, \gamma']$ be a geodesic in $\calC(S)$.  Suppose
$\alpha$ and $\alpha'$ are the first and last curves of $[\gamma,
  \gamma']$ carried by $\tau$.  Fix splitting sequences from $\tau$ to
$\alpha$ and $\alpha'$, respectively.  For each splitting sequence,
the vertex sets form a $\Quasi$--quasi-convex subset inside
$\calC(S)$.  Since $\calC(S)$ is Gromov hyperbolic, the geodesic
segment $[\alpha, \alpha']$ is $\Quasi + \Hyp$--close to the union of
vertex sets.  \refprop{Closing} completes the proof by showing each
vertex cycle, along each splitting sequence, is uniformly close to a
non-carried curve.

Before stating \refprop{Closing} we recall a few definitions.  A train
track $\tau \subset S$ is \emph{large} if all components of $S - \tau$
are disks or peripheral annuli.  A track $\tau$ is \emph{maximal} if
it is not a proper subtrack of any other track.  The \emph{support},
$\supp(\alpha, \tau)$, of a carried curve $\alpha \carr \tau$ is the
union of the branches of $\tau$ along which $\alpha$ runs.

\begin{restate}{Proposition}{Prop:Closing}
Suppose $\tau \subset S$ is a train track and $\alpha \carr \tau$ is a
carried curve.  Suppose $\supp(\alpha, \tau)$ is large, but not
maximal.  Then there is an essential, non-peripheral curve $\beta$ so
that $i(\alpha, \beta) \leq 1$ and any curve isotopic to $\beta$ is
not carried by $\tau$.
\end{restate}

The idea behind \refprop{Closing} is as follows.  Since $\sigma =
\supp(\alpha, \tau)$ is large all components of $S - \sigma$ are disks
or peripheral annuli.  Since $\sigma$ is not maximal there is a
component $Q \subset S - \sigma$ which is not an ideal triangle or a
once-holed ideal monogon.  Hence, there is a \emph{diagonal} $\delta$
of $Q$ that is not carried by $\tau$.  We then extend $\delta$, in a
purely local fashion, to a simple closed curve $\beta$.  By
construction $\beta$ is in \emph{efficient position} with respect to
$\tau$ and meets $\alpha$ at most once.  Finally, we appeal to
Criteria~\ref{Cri:AcrossArc} or~\ref{Cri:AcrossCorner} to show that
$\beta$ is not isotopic to a carried curve.

\section{Background}

We review the basic definitions needed for the rest of the paper.
Throughout we suppose $S$ is a compact, connected, smooth, oriented
surface.

%%% Equip the unit circle $S^1 \subset \CC$ with its canonical
%%% counterclockwise orientation.  A smooth embedding $\alpha \from
%%% S^1 \to S$ is called a \emph{simple closed curve}.  The
%%% equivalence class $[\alpha]$ is generated by reversing orientation
%%% and by smooth isotopy.  We say $\alpha$ is \emph{inessential} if
%%% its image cuts a disk off of $S$.  We say $\alpha$ is
%%% \emph{peripheral} if its image cuts an annulus off of $S$.

\subsection{Corners and index}

Suppose $R \subset S$ is a subsurface with piecewise smooth boundary.
The non-smooth points of $\bdy R$ are the \emph{corners} of $R$.  We
require that the exterior angle at each corner be either $\pi/2$ or
$3\pi/2$, giving \emph{inward} and \emph{outward} corners.  Let
$c_\pm(R)$ count the inward and outward corners of $R$, respectively.
The \emph{index} of $R$ is
\[
\ind(R)= \chi(R) + \frac{c_+(R)}{4} - \frac{c_-(R)}{4}.
\]
For example, if $R$ is a rectangle then its index is zero.  In
general, if $\alpha \subset R$ is a properly embedded, separating arc,
avoiding the corners of $R$, and orthogonal to $\bdy R$, and if $P$
and $Q$ are the closures of the components of $R - \alpha$, then we
have $\ind(P) + \ind(Q) = \ind(R)$.

%%% If $R$ is a region with corners, $\bdy R$ may not have a sensible
%%% division into vertical and horizontal.

\subsection{The curve complex}

Define $i(\alpha, \beta)$ to be the geometric intersection number
between a pair of simple closed curves.
%%% The minimal intersection number between isotopy representatives of
%%% the classes $[\alpha]$ and $[\beta]$.  Henceforth we will only
%%% distinguish between a simple closed curve and its image when
%%% necessary.
The \emph{complex of curves} $\calC(S)$ is, for us, the following
graph.  Vertices are essential, non-peripheral isotopy classes of
simple closed curves.  Edges are pairs of distinct vertices $\alpha$
and $\beta$ where $i(\alpha, \beta) = 0$.  When $\chi(S) \leq -2$ (and
$S$ is not the four-holed sphere) it is an exercise to show that
$\calC(S)$ is connected.  We may equip $\calC(S)$ with the usual edge
metric, denoted $d_S$.
%%% $d_S(\alpha, \beta)$ is the number of edges in a minimal edge path
%%% between $\alpha$ and $\beta$.
Here is a foundational result due to Masur and Minsky.

\begin{theorem}
\label{Thm:Hyperbolic}
\cite[Theorem~1.1]{MasurMinsky99}
The curve complex $\calC(S)$ is Gromov hyperbolic.
\end{theorem}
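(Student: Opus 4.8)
The plan is to reduce Gromov hyperbolicity of $\calC(S)$ to a purely combinatorial criterion and then to verify that criterion with surgered curves. The criterion I have in mind is the ``guessing geodesics'' lemma of Masur--Schleimer and of Bowditch: a connected graph $X$ with its edge metric $d_X$ is Gromov hyperbolic provided there is a constant $M$ and, for each ordered pair of vertices $x, y$, a connected subgraph $\mathcal{L}(x,y)$ containing $x$ and $y$ such that (i) $\operatorname{diam}_X \mathcal{L}(x,y) \leq M$ whenever $d_X(x,y) \leq 1$, and (ii) for all vertices $x, y, z$ the set $\mathcal{L}(x,y)$ lies in the $M$--neighbourhood of $\mathcal{L}(x,z) \cup \mathcal{L}(z,y)$. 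It then suffices to exhibit such a family of uniform paths in $\calC(S)$.

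First I would fix a complete, finite-area hyperbolic metric on the interior of $S$ and represent every isotopy class by its geodesic, so that any two curves are automatically in minimal position and bound no bigons. Given curves $\alpha$ and $\beta$, I would let $\mathcal{L}(\alpha, \beta)$ be the set of \emph{bicorn curves} of the pair: essential, non-peripheral simple closed curves of the form $a \cup b$, where $a$ is a subarc of $\alpha$ and $b$ is a subarc of $\beta$ meeting $a$ exactly in their common endpoints, together with $\alpha$ and $\beta$ themselves. A surgery along an innermost arc of $\alpha \cap \beta$ shows that the bicorns carry a natural partial order under which consecutive bicorns differ by a single surgery, and hence are disjoint; this makes $\mathcal{L}(\alpha, \beta)$ a connected subgraph and gives condition (i) at once, since when $i(\alpha, \beta) = 0$ there are no proper bicorns.

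The content of the argument is condition (ii). Given a third curve $\gamma$ and a proper bicorn $c = a \cup b$ of $(\alpha, \beta)$, I would analyse how $\gamma$ crosses $c$. Up to replacing $c$ by the boundary of a regular neighbourhood of $a \cup b$, which moves it a bounded distance in $\calC(S)$, one of the arcs, say $a \subset \alpha$, is crossed by $\gamma$; taking an innermost arc of $\gamma$ cut off by $c$ and surgering along it produces a bicorn of $(\alpha, \gamma)$ within a uniformly bounded number of surgeries of $c$, that is, within bounded distance. The symmetric case gives a bicorn of $(\gamma, \beta)$. Bounding the number of surgeries needed, uniformly over all triples and depending only on the topological type of $S$, yields the constant $M$, and hence condition (ii); along the way one must check that every surgered curve is essential and non-peripheral, discarding or re-surgering the degenerate configurations.

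The hard part will be precisely this last step: the uniform bound in condition (ii), showing that every bicorn of $(\alpha, \beta)$ is boundedly close to some bicorn of $(\alpha, \gamma)$ or of $(\gamma, \beta)$, regardless of the triple. If that combinatorial bookkeeping proves too delicate, I would fall back on Masur and Minsky's original route: identify $\calC(S)$, up to quasi-isometry, with Teichm\"uller space $\calT(S)$ with its $\varepsilon$--thin parts coned off; use the images of Teichm\"uller geodesics as the family of uniform paths; and prove that nearest-point projection onto a Teichm\"uller geodesic is strongly contracting on the thick part, whence hyperbolicity follows from a Morse-type criterion. In that approach the obstacle merely shifts to the contraction estimate, which requires genuine extremal-length and quadratic-differential input.
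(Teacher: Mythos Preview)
The paper does not prove this theorem at all: it is quoted as background, with a citation to Masur and Minsky's 1999 paper, and then used as a black box in the proof of \refthm{QuasiConvexGS}.  So there is nothing to compare your proposal against.

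For what it is worth, your sketch is a reasonable outline of the bicorn argument of Przytycki and Sisto (itself modelled on the unicorn argument of Hensel, Przytycki, and Webb for arc complexes), and your fallback paragraph correctly summarises the shape of Masur and Minsky's original proof.  You are right that the entire content lies in verifying condition~(ii), and your description of that step is still only a plan rather than a proof: the ``uniform bound on the number of surgeries'' is exactly the lemma that needs to be stated precisely and proved, and the assertion that an innermost arc of $\gamma$ always yields a bicorn of $(\alpha,\gamma)$ or $(\gamma,\beta)$ within bounded distance requires a careful case analysis that you have not supplied.  None of this is wrong, but if you were actually asked to prove the theorem you would need to fill that in; for the purposes of this paper, a citation suffices.
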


\subsection{Train tracks}

%%% A standard reference is the book by Penner, with
%%% Harer~\cite{PennerHarer92}.  We don't use the book at all now.
%%% Perhaps add a reference to Lee's monograph instead - DONE. 
A \emph{pre-track} $\tau \subset S$ is a non-empty finite embedded
graph with various properties as follows.
%%% If S is non-compact we replace finiteness with local finiteness,
%%% or with concrete local models.  Anyway - leave finiteness in. 
The vertices (called \emph{switches}) are all of valence three.  The
edges (called \emph{branches}) are smoothly embedded.  Any point $x$
lying in the interior of a branch $A \subset \tau$ divides $A$ into a
pair of \emph{half-branches}.  At a switch $s \in \tau$, we may orient
the three incident half-branches $A$, $B$, and $C$ away from $s$.
After renaming the branches, if necessary, their tangents satisfy
$V(s, A) = -V(s, B) = -V(s, C)$.  We say $A$ is a \emph{large}
half-branch and $B$ and $C$ are \emph{small}.  This finishes the
definition of a pre-track.  See Figures~\ref{Fig:Split}
and~\ref{Fig:Shift} for various local pictures of a pre-track.

A branch $B \subset \tau$ is either \emph{small}, \emph{mixed}, or
\emph{large} as it contains zero, one, or two large half-branches.  We
may \emph{split} a pre-track $\tau$ along a large branch, as shown in
\reffig{Split}, to obtain a new track $\tau'$.  Conversely, we
\emph{fold} $\tau'$ to obtain $\tau$.  If a branch is mixed then we
may \emph{shift} along it to obtain $\tau'$, as shown in
\reffig{Shift}.  Note shifting is symmetric; if $\tau'$ is a shift of
$\tau$ then $\tau$ is a shift of $\tau'$.
%%% Note that we do allow shifting, below. 

\begin{figure}[htbp]
\includegraphics[width=0.7\textwidth]{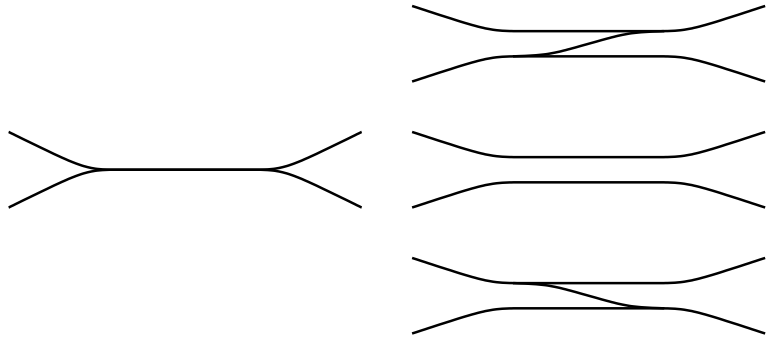}
\caption{A large branch admits a left, central, or right splitting.}
%%% Right, left splits, and shifts are planar rotations of the tree
%%% with four leaves.  Central splittings are also called collisions.
\label{Fig:Split}
\end{figure}

\begin{figure}[htbp]
\includegraphics[width=0.7\textwidth]{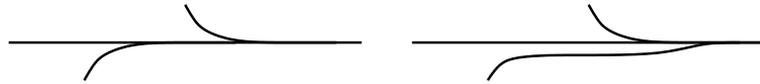}
\caption{A mixed branch admits a shift.}
\label{Fig:Shift}
\end{figure}

Suppose $\tau \subset S$ is a pre-track.  We define $N = N(\tau)$, a
\emph{tie neighborhood} of $\tau$ as follows.  For every branch $B$ we
have a rectangle $R = R_B = B \cross I$.  For all $x \in B$ we call
$\{x\} \cross I$ a \emph{tie}.  The two ties of $\bdy B \cross I$ are
the \emph{vertical boundary} $\bdy_v R$ of $R$.  The boundaries of all
of the ties form the \emph{horizontal boundary} $\bdy_h R$ of $R$.
Any tie $J \subset R$, meeting the interior of $R$, cuts $R$ into a
pair of \emph{half-rectangles}.  The points $\bdy \bdy_v R = \bdy
\bdy_h R$ are the corners of $R$; all four are outward corners.

%%% Would be nicer to have the caption on the side...

\begin{wrapfigure}[14]{r}{0.45\textwidth}
\vspace{-10pt}
\centering 
\includegraphics[width = 0.35\textwidth, angle=180]{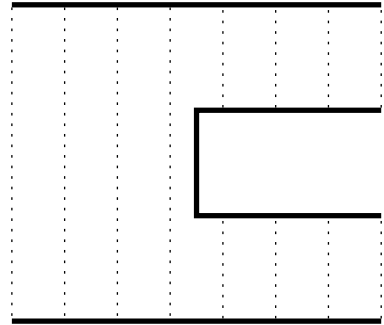}
\caption{The local model for $N(\tau)$ near a switch.  The dotted
  lines are ties.}
\label{Fig:Tie}
\end{wrapfigure}

We embed all of the rectangles $R_B$ into $S$ as follows.  Suppose $A$
(large) and $B$ and $C$ (small) are the half-branches incident to the
switch $s$.  The vertical boundary of $R_B$ (respectively $R_C$) is
glued to the upper (lower) third of the vertical boundary of $R_A$.
See \reffig{Tie}.  The resulting tie neighborhood $N = N(\tau)$ has
horizontal boundary $\bdy_h N = \bigcup \bdy_h R_B$.  The vertical
boundary of $N$ is the closure of $\bdy N - \bdy_h N$.  Again $\bdy
\bdy_v N$ is the set of corners of $N$; all of these are inward
corners.  
%%% Note that all ties are orthogonal to $\bdy_h N$ at their
%%% endpoints.  
We use $n(\tau)$ to denote the interior of $N(\tau)$.  We may now give
our definition of a train track.

\begin{definition}
Suppose $\tau \subset S$ is a pre-track and $N(\tau)$ is a tie
neighborhood.  We say $\tau$ is a \emph{train track} if
every component of $S - n(\tau)$ has negative index. 
%%% no smooth frontier - every component of $\bdy N(\tau)$ contains a
%%% corner.  [If we required that, then a simple closed curve is not a
%%% track. Also, it is difficult to say where this is used.  Note that
%%% large tracks automatically have this property.  Hmm.  Perhaps in
%%% the future, we'll need to rule out smooth boundary when extending
%%% tracks.  That is, we'd really like diagonals to start and end on
%%% vertical boundary components.]
\end{definition}

A track $\tau \subset S$ is \emph{large} if every component of $S -
n(\tau)$ is either a disk or a peripheral annulus.  A track $\tau$ is
\emph{maximal} if every component of $S - n(\tau)$ is either a hexagon
or a once-holed bigon.
%%% Ie an ``ideal triangle'' or a ``once-holed ideal monogon''.

\subsection{Carried curves and transverse measures}
Suppose $\alpha \subset S$ is a simple closed curve.  If $\alpha
\subset N(\tau)$ and $\alpha$ is transverse to the ties of $N(\tau)$
then we say $\alpha$ is \emph{carried} by $\tau$.  We write this as
$\alpha \carr \tau$.  
%%% An isotopy class $[\beta]$ of curves is \emph{not carried} by
%%% $\tau$ if no representative is carried.  
It is an exercise to show that if $\alpha$ is carried then $\alpha$ is
essential and non-peripheral.
%%% Use the fact that index is additive and that the index of N(\tau)
%%% is zero.
We define $\calC(\tau) = \{ \alpha \in \calC(S) \st \alpha \carr
\tau\}$.  Note that $\calC(\tau)$ is non-empty.

Let $\calB = \calB_\tau$ be the set of branches of $\tau$.  Fix a
switch $s$ and suppose that the half-branches $A$, $B$, and $C$ are
adjacent to $s$, with $A$ being large.  A function $\mu \from \calB
\to \RR_{\geq 0}$ satisfies the \emph{switch equality} at $s$ if
\[
\mu(A) = \mu(B) + \mu(C).
\]
We call $\mu$ a \emph{transverse measure} if $\mu$ satisfies all
switch equalities.  For example, any carried curve $\alpha \carr \tau$
gives an integral transverse measure $\mu_\alpha$.
%%% Count intersections with any tie in $R_B$ to get $\mu_\alpha(B)$.
This permits us to define $\sigma = \supp(\alpha, \tau)$, the
\emph{support} of $\alpha$ in $\tau$: a branch $B \subset \tau$ lies
in $\sigma$ if $\mu_\alpha(B) > 0$.
%%% Note that the support may have a smooth horizontal boundary.

Here is a ``basic observation'' from~\cite[page~117]{MasurMinsky99}.

\begin{lemma}
\label{Lem:Basic}
Suppose $\tau$ is a maximal train track and suppose $\alpha \carr
\tau$ has full support: $\tau = \supp(\alpha, \tau)$.  Suppose $\beta$
is an essential, non-peripheral curve with $i(\alpha, \beta) = 0$.
Then $\beta$ is also carried by $\tau$.  \qed
\end{lemma}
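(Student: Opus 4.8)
The plan is to produce, after an isotopy, a representative of $\beta$ lying inside the tie neighbourhood $N(\tau)$; a small isotopy then makes $\beta$ transverse to the ties, so that $\beta \carr \tau$. We may assume $\beta$ is not isotopic to $\alpha$, since otherwise there is nothing to prove. As $i(\alpha,\beta) = 0$, the bigon criterion lets us first isotope $\beta$ so that $\beta \cap \alpha = \emptyset$, and henceforth every isotopy will be required to preserve this. Note that $\beta$ cannot be isotoped into a component of $S - n(\tau)$: since $\tau$ is maximal each such component is a disk or a peripheral annulus, which contains no essential non-peripheral curve. So the whole task is to push $\beta$ off of the complementary regions $S - n(\tau)$.

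First I would put $\beta$ into efficient position: among curves isotopic to $\beta$ and disjoint from $\alpha$, choose one minimising $|\beta \cap \bdy_h N(\tau)|$. In this position $\beta$ meets each component $Q$ of $S - n(\tau)$ in a disjoint union of arcs with endpoints on the horizontal sides of $Q$, and by minimality no such arc can be isotoped (keeping $\beta$ disjoint from $\alpha$) across a horizontal side of $Q$ into $N(\tau)$. Since a horizontal side of $Q$ is disjoint from $\alpha$, an arc of $\beta \cap Q$ cutting a disk out of $Q$ --- a disk meeting neither $\bdy S$ nor a cusp of $Q$ --- would be removable this way; hence every arc of $\beta \cap Q$ is essential in $Q$. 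If there are none, then $\beta$ is disjoint from $\bdy_h N(\tau)$, hence lies in $N(\tau)$ (not in a complementary region, by the first paragraph), and we are done. So the problem reduces to ruling out essential arcs of $\beta$ in the complementary regions.

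This is where I would use the hypothesis $\supp(\alpha, \tau) = \tau$. Because $\mu_\alpha(B) \geq 1$ for every branch $B$, the sub-rectangle of $R_B$ adjacent to each horizontal side of each complementary region is capped off by a strand of $\alpha$. Suppose then that $\beta$ has an essential arc $a$ in a complementary region $Q$ (a hexagon or a once-holed bigon, as $\tau$ is maximal). Following $\beta$ out of $Q$ across an endpoint of $a$, it enters such a capped sub-rectangle; it cannot cross the capping strand of $\alpha$, and by minimality it cannot turn back out through $\bdy_h N(\tau)$. Hence it must leave through the vertical boundary at a switch into the next (again outermost) sub-rectangle, and so on, so that $\beta$ runs around a cyclic chain of outermost sub-rectangles --- each pinned between a strand of $\alpha$ and $\bdy_h N(\tau)$ --- with the essential arcs of $\beta \cap (S - n(\tau))$ serving as excursions through cusps of the complementary regions.

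The hard part will be the final step: showing that such a $\beta$ must be isotopic to $\alpha$ or to a peripheral curve --- in either case contradicting our hypotheses and finishing the proof. For this I would analyse, switch by switch and cusp by cusp, how the parallel-to-$\alpha$ runs of $\beta$ match up across switches and how each cusp excursion sits relative to the two strands of $\alpha$ bordering that cusp; maximality, which keeps every complementary region a hexagon or a once-holed bigon, makes this bookkeeping finite, and the expected outcome is that each cusp excursion can be isotoped away, leaving $\beta$ parallel to $\alpha$ or to $\bdy S$. The remaining ingredients --- the bigon criterion, the local model of $N(\tau)$ at a switch, and the structure of $S - n(\tau)$ for maximal $\tau$ --- are routine.
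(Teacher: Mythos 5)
The paper does not actually prove \reflem{Basic}: it is stated with a \textrm{\qedsymbol} and attributed to page~117 of Masur--Minsky, where it appears as a ``basic observation.''  So there is no proof in the paper to compare against, and your attempt has to stand on its own.

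Your opening two paragraphs are a reasonable set-up (take $\beta$ disjoint from $\alpha$, minimise $|\beta\cap\bdy_h N(\tau)|$ among such positions, observe that inessential arcs in complementary regions and $\beta$ lying wholly in a complementary region are both impossible).  But there are two genuine gaps.  First, even if you show $\beta\cap\bdy_h N(\tau)=\emptyset$, the sentence ``a small isotopy then makes $\beta$ transverse to the ties'' is not justified.  A curve inside $N(\tau)$ and disjoint from $\alpha$ can still have ``U-turn'' points tangent to the ties, and such tangencies cannot be removed by a small isotopy: the obvious bigon one would collapse is bounded in part by $\bdy_v N(\tau)$ (a cusp), and pushing across it sends $\beta$ back \emph{out} of $N(\tau)$.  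Removing these tangencies while staying disjoint from $\alpha$ is exactly where the full-support hypothesis and the structure of the components of $N(\tau)\smallsetminus n(\alpha)$ have to be invoked again; your proposal simply asserts it is a small isotopy.  Second, and more seriously, the last paragraph --- the case you yourself flag as ``the hard part'' --- is only a plan, not an argument.  You assert that following $\beta$ around the chain of outermost sub-rectangles and cusp excursions must reveal it to be parallel to $\alpha$ or to $\bdy S$, and that the bookkeeping is ``routine.''  That conclusion is not obvious and is not established: for instance, an essential arc in a triangle cutting off a cusp can in fact be isotoped \emph{through} the cusp into $N(\tau)$ without crossing $\alpha$ (landing in the lens between the two adjacent strands of $\alpha$), which contradicts your picture of $\beta$ being trapped in the outermost shell and instead brings you back to the first, unaddressed gap about tie-tangencies.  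Likewise the treatment of once-holed bigons (arcs around $\bdy S$, which are the ones that genuinely cannot be pushed through a cusp) is not carried out.  As written, the final step is a description of what you hope happens rather than a proof, and it is precisely the step where the statement could fail.
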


Since the switch equalities are homogeneous the set of solutions
$\ML(\tau)$ is a rational cone.  We projectivize $\ML(\tau)$ to obtain
$P(\tau)$, a non-empty convex polytope.  All vertices of $P(\tau)$
arise from carried curves; we call such curves \emph{vertex cycles}
for $\tau$.  Thus the set $V(\tau)$ of vertex cycles is naturally a
subset of $\calC(\tau) \subset \calC(S)$.  Deduce if $\tau'$ is a
shift of $\tau$ then $V(\tau') = V(\tau)$.

\begin{figure}[htbp]
\includegraphics[width=0.7\textwidth]{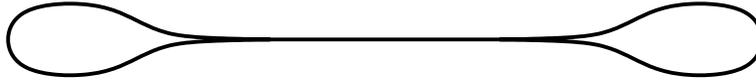}
\caption{A \emph{barbell}: a train track with one large branch and two
  small branches, where the midpoint of the large branch separates.}
\label{Fig:Barbell}
\end{figure}

\begin{lemma}
\label{Lem:Vertex}
A carried curve $\alpha \carr \tau$ is a vertex cycle if and only if
$\supp(\alpha, \tau)$ is either a simple close curve or a barbell (see
\reffig{Barbell}).
\end{lemma}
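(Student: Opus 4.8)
The plan is to translate the statement into the convex geometry of the cone $\ML(\tau) \subset \RR^{\calB}_{\geq 0}$. By definition, $\alpha$ is a vertex cycle exactly when its transverse measure $\mu_\alpha$ spans an extreme ray of $\ML(\tau)$. Since the only inequalities defining $\ML(\tau)$ are the coordinate inequalities $\mu(B) \geq 0$, the minimal face of $\ML(\tau)$ containing $\mu_\alpha$ is the subcone on which every branch outside $\sigma = \supp(\alpha, \tau)$ vanishes, and this subcone is canonically identified with $\ML(\sigma)$. Hence $\alpha$ is a vertex cycle if and only if $\dim \ML(\sigma) = 1$, and the lemma reduces to showing that $\dim \ML(\sigma) = 1$ precisely when $\sigma$ is a simple closed curve or a barbell. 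The ``if'' direction I would dispatch by direct computation: if $\sigma$ is a simple closed curve then $\ML(\sigma)$ is visibly a ray, and if $\sigma$ is a barbell with large branch $E$ and small loops $L_1, L_2$ based at switches $s_1, s_2$, then, since $E$ is large, the switch equality at $s_i$ reads $\nu(E) = 2\nu(L_i)$ (both small half-branches at $s_i$ belong to $L_i$); these two equalities are independent, so $\dim \ML(\sigma) = 3 - 2 = 1$.

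For the converse, assume $\dim \ML(\sigma) = 1$, and first pass to a trivalent model. At a switch of $\tau$ met by $\sigma$, with large half-branch on the branch $A$ and small half-branches on $B, C$, the equality $\mu_\alpha(A) = \mu_\alpha(B) + \mu_\alpha(C)$ together with $\mu_\alpha(A) > 0$ puts at least one of $B, C$ in $\sigma$; if exactly one, say $B$, then $\mu_\alpha(A) = \mu_\alpha(B)$ and $A \cup B$ is a smooth arc through the switch. Smoothing every such switch produces a train track $\sigma'$ still carrying $\mu_\alpha$ with full support, all of whose switches are trivalent, with $\ML(\sigma') \cong \ML(\sigma)$, and connected because $\alpha$ is. If $\sigma'$ has no switches, it is a simple closed curve and we are done. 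Otherwise, a count: summing switch valences, a connected trivalent graph has $3s = 2b$, so $s$ is even and $b = 3s/2$; the weight space $W \subset \RR^b$ is cut out by $s$ switch equalities, so $\dim W \geq s/2$, and since $\mu_\alpha$ is strictly positive, $\ML(\sigma')$ is full-dimensional in $W$. Thus $1 = \dim \ML(\sigma') = \dim W \geq s/2$, and as $s$ is a positive even integer this forces $s = 2$ and $b = 3$.

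Finally, a connected trivalent graph with two switches and three branches is either the theta graph or the dumbbell. For the theta graph, a strictly positive measure forces the two switch equalities to coincide (any other pairing sets a branch to zero), so $\dim \ML = 2$, which is impossible; hence $\sigma'$ is a dumbbell. A strictly positive measure then forces the connecting edge to be large, since otherwise a loop half-branch is large at some switch and the corresponding switch equality sets the edge to zero; so the midpoint of the edge separates and $\sigma'$ is a barbell as in \reffig{Barbell}. Undoing the smoothing identifies $\sigma$ with a barbell (up to subdividing branches at bivalent switches). I expect the main obstacle to be the bookkeeping in the reduction to the trivalent model — checking that the smoothing is well defined, that $\sigma'$ is genuinely a train track carrying $\mu_\alpha$ with full support, and that $\ML(\sigma') \cong \ML(\sigma)$ — together with the face-lattice argument identifying ``$\mu_\alpha$ spans an extreme ray of $\ML(\tau)$'' with ``$\dim \ML(\supp(\alpha,\tau)) = 1$''; the dimension count and the two-switch classification are then routine.
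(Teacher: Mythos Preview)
Your argument is correct. The face-lattice reduction to $\dim \ML(\sigma) = 1$ is the right translation, the smoothing to a trivalent model preserves $\ML$ as you say, the Euler count $3s = 2b$ together with $\dim W \geq b - s = s/2$ forces $s \in \{0,2\}$, and the theta/dumbbell dichotomy finishes it; your check that full support on a theta graph forces the two switch equalities to coincide (else some branch weight vanishes) is exactly what rules that case out.

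By contrast, the paper does not argue any of this: it cites Mosher's lecture notes (Proposition~3.11.3(3)) for the forward direction and declares the backward direction an exercise. So your route is genuinely different in that it is self-contained and uses only the convex geometry of $\ML(\tau)$, whereas the paper outsources the substantive implication to an external reference. What the paper's approach buys is brevity; what yours buys is independence from Mosher and a transparent explanation of \emph{why} only the simple closed curve and the barbell arise, via the dimension inequality $\dim \ML(\sigma') \geq s/2$. The one place to tidy is the last sentence: since the paper's tracks are already trivalent, $\sigma$ differs from your $\sigma'$ only by bivalent subdivisions, so ``$\supp(\alpha,\tau)$ is a barbell'' should be read up to erasing those bivalent points; you note this, but it is worth saying explicitly that this is how the lemma's statement is to be interpreted.
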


\begin{proof}
The forward direction is given by Proposition~3.11.3(3)
of~\cite{Mosher03}. 
%%% Here is an argument relying on surgery.  In every branch we see at
%%% most two arcs of $\alpha$, and these are oriented oppositely.  In
%%% fact, we can assume that the orientations of the two arcs are
%%% always anti-clockwise, using a futher surgery arguement.  Pick one
%%% tie per large branch with weight two.  This gives a chord diagram.
%%% If any pair links, then there is a surgery.  Finally, if there are
%%% at least two chords, then there are at least two outmost chords.
%%% So double and surger.//
The backward direction is an exercise in the definitions.
%%% If \alpha \carr \tau then any decomposition of \alpha inside of
%%% \tau (that is, as a sum of vertex cycles) is in fact a
%%% decomposition inside of \supp(\alpha, \tau).  If the support is a
%%% curve or barbell, then the support has only one vertex cycle -
%%% that is, \alpha.  
\end{proof}

The usual upper bound on distance in $\calC(S)$, coming from geometric
intersection number~\cite[Lemma~1.21]{Schleimer06b}, gives the
following.

\begin{lemma}
\label{Lem:Diameter}
For any surface $S$ there is a constant $\DiamV$ with the following
property.  Suppose $\tau$ is a track.  Suppose $\sigma$ is a split,
shift, or subtrack of $\tau$.  Then the diameter of $V(\tau) \cup
V(\tau')$ inside of $\calC(S)$ is at most $\DiamV$.  \qed
\end{lemma}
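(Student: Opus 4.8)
The plan is to reduce the claim to two standard facts: first, that geometric intersection number controls distance in the curve complex, and second, that both $V(\tau)$ and $V(\sigma)$ consist of curves built out of very few branches, so any vertex cycle of $\tau$ has controlled intersection with any vertex cycle of $\sigma$. The cited Lemma~1.21 of~\cite{Schleimer06b} gives a function $f \from \mathbb{N} \to \mathbb{N}$ with $d_S(\alpha, \beta) \leq f\big(i(\alpha, \beta)\big)$ for all essential non-peripheral $\alpha, \beta$; so it suffices to produce a bound on $i(\alpha, \beta)$, for $\alpha \in V(\tau)$ and $\beta \in V(\tau')$ (writing $\tau'$ for the split, shift, or subtrack $\sigma$), depending only on $S$, together with an a priori bound on the diameters of $V(\tau)$ and $V(\tau')$ individually.

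First I would record that the number of branches of any train track on $S$ is bounded by a constant depending only on $\chi(S)$ (indeed by $-6\chi(S)$ for a generic track, and the bound only drops for subtracks); call this bound $b = b(S)$. By \reflem{Vertex}, a vertex cycle is carried with support a single simple closed curve or a barbell, so as a weighted curve it assigns weight at most two to each branch it runs over (a barbell's large branch carries weight $2$, the small branches weight $1$; a simple closed curve support carries weight $1$ everywhere). Consequently, if $\alpha \carr \rho$ is a vertex cycle of any track $\rho$, then for any curve $\gamma$ transverse to $\rho$ and efficient with respect to it, $i(\alpha, \gamma)$ is bounded by $2 \sum_{B} i(\gamma, \text{tie of } B) \leq 2 b(S) \cdot (\text{max ties-crossings of } \gamma)$. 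The cleaner route, and the one I would actually take, is to note that a vertex cycle $\alpha$ of $\rho$ satisfies $i(\alpha, \alpha') \leq 4$ for any two vertex cycles $\alpha, \alpha'$ of the \emph{same} track $\rho$ — because two curves carried by a common track with all branch weights $\leq 2$ can be realized inside $N(\rho)$ meeting only inside branch rectangles, with at most $4$ crossings per branch and only boundedly many branches — hence $\operatorname{diam}(V(\rho)) \leq f(4 b(S))$ for every track $\rho$ on $S$.

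The main step is then to bound $i(\alpha, \beta)$ for a single $\alpha \in V(\tau)$ and a single $\beta \in V(\tau')$ when $\tau'$ is obtained from $\tau$ by a split, a shift, or by passing to a subtrack. In each of these three cases $\tau$ and $\tau'$ admit a \emph{common} tie neighborhood, or tie neighborhoods that differ only inside a single branch rectangle (for a split) or not at all (for a shift, by the remark in the excerpt that $V(\tau') = V(\tau)$ for a shift, and for a subtrack since $N(\tau') \subset N(\tau)$). So I would isotope $\beta$ to be carried by $\tau'$, hence to lie inside $N(\tau') \subseteq N(\tau)$ (suitably enlarged across the splitting region in the split case), transverse to ties; then $\alpha$ and $\beta$ both live in $N(\tau)$ transverse to ties, and can be made transverse to each other meeting only inside branch rectangles. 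Within the rectangle $R_B$ of a branch $B$, the number of strands of $\alpha$ is $\mu_\alpha(B) \leq 2$ and the number of strands of $\beta$ is $\mu_\beta(B) \leq 2$ (again using \reflem{Vertex}; in the split case the at most one branch where the picture is subdivided contributes a bounded extra amount), so the contribution to $i(\alpha, \beta)$ from $R_B$ is at most $4$, plus a bounded local correction near the split. Summing over the $\leq b(S)$ branches gives $i(\alpha, \beta) \leq 4 b(S) + C_0$ for a universal constant $C_0$. Setting $\DiamV = f\big(4 b(S) + C_0\big) + 2 f(4 b(S))$ and combining with the triangle inequality and the individual diameter bounds on $V(\tau)$ and $V(\tau')$ yields the lemma.

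The hard part will be handling the split case carefully: under a split $\tau \to \tau'$ the tie neighborhoods are not literally equal, and a vertex cycle of $\tau'$ need not be carried by $\tau$ (that is the whole point of splitting), so I cannot simply say "everything lives in $N(\tau)$". The fix is to observe that a split is supported in a neighborhood of one large branch, so $N(\tau) \cup N(\tau')$ is contained in a slightly enlarged tie neighborhood $N^+$ in which both $\alpha$ and $\beta$ are carried, and the only place their combined combinatorics can be worse than "$\leq 2$ strands per branch rectangle" is inside that one enlarged region, where it is still bounded by an absolute constant. I would also want to double-check that passing to a subtrack does not introduce complications — but there $N(\tau') \subseteq N(\tau)$ genuinely, and a vertex cycle of the subtrack is a fortiori a weighted curve in $N(\tau)$ with branch weights $\leq 2$, so that case is immediate. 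Everything else is the routine intersection-counting sketched above, and the constant $\DiamV$ depends only on $b(S)$ and on the function $f$ from~\cite[Lemma~1.21]{Schleimer06b}, hence only on $S$.
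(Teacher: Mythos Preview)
Your approach is correct and matches the paper's: the paper proves this lemma simply by citing the intersection-number bound on curve-complex distance \cite[Lemma~1.21]{Schleimer06b} and marking the statement with a \qed, leaving implicit exactly the computation you spell out---that vertex cycles have branch weights at most two (\reflem{Vertex}), that the number of branches is bounded in terms of $S$, and hence that any two vertex cycles of $\tau$ and of a split, shift, or subtrack $\tau'$ have uniformly bounded geometric intersection.

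One small correction: you write that ``a vertex cycle of $\tau'$ need not be carried by $\tau$ (that is the whole point of splitting)'', but this has the direction reversed. If $\tau'$ is a split of $\tau$ then $\tau'$ folds onto $\tau$, so $\calC(\tau') \subset \calC(\tau)$; every vertex cycle of $\tau'$ \emph{is} carried by $\tau$. (What fails is the other inclusion: curves carried by $\tau$ need not survive the split.) This actually simplifies your argument---you can work directly inside $N(\tau)$ without the enlarged neighborhood $N^+$, and the only thing to check is that a vertex cycle $\beta$ of $\tau'$, when regarded as carried by $\tau$, still has bounded branch weights. Since a single split alters the branch structure only locally, the weights of $\beta$ on $\tau$ are bounded by a fixed multiple of its weights on $\tau'$, and your intersection count goes through. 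Your ``fix'' via $N^+$ also works; it is just not needed.
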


%%% Note that Lemmas~\ref{Lem:Vertex} and~\ref{Lem:Diameter} also hold if
%%% $\tau$ is a pre-track which is in turn a subtrack of a train track.

\subsection{Quasi-convexity}

A subset $A \subset \calC(S)$ is $K$--\emph{quasi-convex} if for every
$\alpha$ and $\beta$ in $A$, any geodesic $[\alpha, \beta] \subset
\calC(S)$ lies within a $K$--neighborhood of $A$.  Recall if $A$ and
$B$ are $K$--quasi-convex sets in $\calC(S)$, and if $A \cap B$ is
non-empty, then the union $A \cup B$ is $K + \Hyp$--quasi-convex.  
%%%
%%% A \emph{train-track sequence} $\{\tau_i\}$ has the property that
%%% $\tau_{i+1}$ is a split, a shift, or a subtrack of $\tau_i$.
%%%
We now have a more difficult result.

\begin{theorem}
\label{Thm:QuasiConvexMM}
\cite[Theorem~1.3]{MasurMinsky04} For any surface $S$ there is a
constant $\Quasi$ with the following property.  Suppose that
$\{\tau_i\}$ is sequence where $\tau_{i+1}$ is a split, shift, or
subtrack $\tau_i$.  Then the set $V = \bigcup_i V(\tau_i)$ is
$\Quasi$--quasi-convex in $\calC(S)$. \qed
\end{theorem}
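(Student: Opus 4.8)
The plan is to realise $V = \bigcup_i V(\tau_i)$ as the coarse image of an unparametrised quasi-geodesic in $\calC(S)$, and to control that image through its projections to proper subsurfaces. First I reduce to a single path: by \reflem{Diameter} each $V(\tau_i) \cup V(\tau_{i+1})$ has diameter $\leq \DiamV$, so I pick a vertex cycle $v_i \in V(\tau_i)$ for each $i$; then $i \mapsto v_i$ is $\DiamV$--coarsely Lipschitz and $V \subseteq N_{\DiamV}(\{v_i\})$. Since $\calC(S)$ is $\Hyp$--hyperbolic (\refthm{Hyperbolic}), it suffices to bound, by a constant depending only on $S$, the Hausdorff distance between $\{v_i\}_{i=a}^{b}$ and any geodesic $[v_a,v_b]$; and since $\tau_a \to \cdots \to \tau_b$ is again a split/shift/subtrack sequence, it is enough to do this for $a=0$, $b=n$ while quantifying over all such sequences. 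Equivalently, I must show $\{v_i\}$ is an unparametrised quasi-geodesic with uniform constants.

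For an essential subsurface $W \subseteq S$ (annuli included) let $\pi_W$ be the subsurface projection and $d_W$ the associated projection distance. Masur and Minsky's distance formula for the curve complex (a result prior to and independent of the present one) yields a threshold $M_0$ such that, for $M \geq M_0$,
\[
d_S(v_i, v_j) \;\asymp\; \sum_{W \subseteq S} \, [\,d_W(v_i,v_j)\,]_M ,
\]
the sum over isotopy classes of essential subsurfaces, $[x]_M = x$ for $x \geq M$ and $0$ otherwise, with constants depending only on $S$ and $M$. The standard criterion for $\{v_i\}$ to be an unparametrised quasi-geodesic then reads: the path is coarsely Lipschitz (done) and, for every $W$, the sequence $k \mapsto \pi_W(v_k)$ — over those $k$ for which it is defined — fellow-travels a geodesic of $\calC(W)$, traversing it coarsely monotonically with uniformly bounded backtracking. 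Granting this, the usual hierarchy argument (resolve $[v_0,v_n]$ into its subsurface data, or apply the displayed estimate to the triple $v_0, x, v_n$ for $x \in [v_0,v_n]$) produces a uniform $\Quasi$ and an index $k$ with $d_S(x, v_k) \leq \Quasi$; since $V \subseteq N_{\DiamV}(\{v_i\})$ this is the asserted quasi-convexity.

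The content is the projection claim, which I prove by induction on the complexity of $S$. Fix a proper essential $W \subsetneq S$. Put $\partial W$ in efficient position with respect to $\tau_i$; the branches of $\tau_i$ meeting $W$ assemble, up to bounded ambiguity, into a train track $\tau_i | W$ carried in $W$, with $\pi_W(v_i)$ coarsely $V(\tau_i | W)$. A split of $\tau_i$ along a large branch is either invisible to $W$ — the branch, or the part of $\tau_i$ carrying the laminations $W$ sees, lies outside $W$ — or induces a split or shift of $\tau_i | W$; passing to a subtrack of $\tau_i$ induces a subtrack of, or no change to, $\tau_i | W$. Hence, after deleting boundedly many $W$--invisible indices, $\{\tau_i | W\}$ is again a split/shift/subtrack sequence in $W$, and since $W$ has smaller complexity the inductive hypothesis makes $\{V(\tau_i | W)\}$ a uniform quasi-geodesic in $\calC(W)$. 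The base cases are $W$ an annulus, where $\calC(W)$ is quasi-isometric to $\RR$ and a splitting sequence restricts to a monotone twisting, and $W$ of complexity one, where $\calC(W)$ is the Farey graph and one checks directly that the carried curves of a train track, hence the $V(\tau_i | W)$, sit as nested Farey intervals tracing a quasi-geodesic. Bounded backtracking of $k \mapsto \pi_W(v_k)$ is then forced by transitivity of carrying: $P(\tau_{i+1}) \subseteq P(\tau_i)$ for all $i$, so once $\tau_i | W$ becomes large in $W$ no later projection $\pi_W(v_k)$ can cross back past it.

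The delicate step is this restriction claim: pinning down $\tau_i | W$ when $\partial W$ is only efficient, not minimal, with respect to $\tau_i$; correctly identifying which splittings are $W$--invisible; and bounding — independently of the sequence — the number of invisible or otherwise irregular steps occurring between two $W$--active indices. The complexity induction is exactly what keeps all of these bounds, and hence $\Quasi$, uniform, while the irreversibility of ``$\tau_i|W$ becoming large in $W$'' is what upgrades the restricted quasi-geodesic to genuine bounded backtracking for $\pi_W$.
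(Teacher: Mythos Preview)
The paper does not prove this theorem: note the citation \cite[Theorem~1.3]{MasurMinsky04} and the terminal \qedsymbol. The Remark that follows sketches the actual Masur--Minsky argument, whose engine is the \emph{nesting lemma} (\cite[Lemma~3.4]{MasurMinsky04}, or \cite[Lemma~3.2]{GadreTsai11} without transverse recurrence): after a definite number of splits, $P(\tau')$ nests inside $P(\tau)$ in the sense that every curve hitting $\tau$ at most once is carried by the dual track. This yields a coarse nearest-point retraction from $\calC(S)$ onto the splitting sequence, and quasi-convexity follows from hyperbolicity by the usual contraction argument. There is no induction on complexity and no appeal to hierarchies or subsurface projections.

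Your proposal, by contrast, rests on a distance formula
\[
d_S(v_i,v_j)\;\asymp\;\sum_{W\subseteq S}\,[d_W(v_i,v_j)]_M
\]
for curve-complex distance, and this is the gap. If the sum includes $W=S$ the formula is tautological and gives you no leverage; if it ranges only over proper $W$ it is false --- along a pseudo-Anosov axis, points recede in $\calC(S)$ while every proper $d_W$ stays uniformly bounded (this is the Bounded Geodesic Image theorem). The Masur--Minsky distance formula is a statement about the marking complex (equivalently, word metric on the mapping class group), not about $\calC(S)$, so it cannot be invoked here as a black box ``prior to and independent of'' the result in question.

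A second difficulty is the restriction $\tau_i\,|\,W$. Making this precise --- so that splits of $\tau_i$ induce splits, shifts, or identities on $\tau_i\,|\,W$, with only boundedly many irregular steps between $W$--active indices --- is substantial, and is not in the literature in the form you need. Even granting it, the induction would only control proper projections, which by the previous paragraph is not enough to force $\{v_i\}$ to be an unparametrised quasi-geodesic in $\calC(S)$. The missing top-level ingredient is exactly the nesting phenomenon that Masur and Minsky establish directly.
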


\begin{remark}
In the first statement of their
Theorem~1.3~\cite[page~310]{MasurMinsky04} Masur and Minsky assume
their tracks are large and recurrent.  However, as they remark after
their Lemma~3.1, largeness is not necessary.  Also, it is an exercise
to eliminate the hypothesis of recurrence, say by using
\reflem{Diameter} and the subtracks $\supp(\alpha, \tau_i)$ (for any
fixed curve $\alpha \in \bigcap P(\tau_i)$).

A more subtle point is that their Lemmas~3.2, 3.3, and~3.4 use the
train-track machinery of another of their papers~\cite{MasurMinsky99}.
Transverse recurrence is used in an essential way in the second
paragraph of the proof of Lemma~4.5 of that earlier paper.  However
the crucial ``nesting lemma''~\cite[Lemma~3.4]{MasurMinsky04} can be
proved without transverse recurrence.  This is done in Lemma~3.2
of~\cite{GadreTsai11}.

Thus, as stated above, \refthm{QuasiConvexMM} does not require any
hypothesis of largeness, recurrence, or transverse recurrence.
\end{remark}

\section{Proof of the main theorem}

We now have enough tools in place to see how \refprop{Closing} implies
our main result.

\begin{theorem}
\label{Thm:QuasiConvexGS}
Suppose $\tau \subset S$ is a train track.  The curves not carried by
$\tau$ form a quasi-convex subset of $\calC(S)$.
\end{theorem}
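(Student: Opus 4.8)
The plan is to carry out the sketch from the introduction, being careful about two reductions and about the precise range of \refprop{Closing}. First I would dispose of the easy situations. Let $\gamma$ and $\gamma'$ be curves not carried by $\tau$, and fix a geodesic $[\gamma,\gamma'] \subset \calC(S)$. If no vertex of $[\gamma,\gamma']$ lies in $\calC(\tau)$ there is nothing to prove; otherwise let $\alpha$ and $\alpha'$ be the first and last vertices of $[\gamma,\gamma']$ carried by $\tau$. The subpaths $[\gamma,\alpha]$ and $[\alpha',\gamma']$ meet $\calC(\tau)$ only at a single endpoint, so it suffices to show that every vertex of $[\alpha,\alpha']$ lies a uniformly bounded distance from $\calC(S) - \calC(\tau)$. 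I would also reduce to the case that $\tau$ is large: if it is not, then, since $\tau$ is nonempty and $\chi(S) \le -2$, some component of $S - n(\tau)$ has an essential, non-peripheral curve $c$ of $S$ in its frontier; every carried curve is disjoint from $c$, so $\calC(\tau)$ lies in the $1$--ball around $c$ and has diameter at most $2$, whence $\calC(S) - \calC(\tau)$ is quasi-convex for trivial reasons.

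Next I would enclose $[\alpha,\alpha']$ inside quasi-convex vertex sets. Since $\alpha \carr \tau$, a maximal splitting sequence directed by $\alpha$, followed by a subtrack move down to $\supp(\alpha,\cdot)$, produces a sequence $\tau = \tau_0, \tau_1, \dots, \tau_m$ of splits, shifts, and subtracks in which $\alpha$ is a vertex cycle of $\tau_m$; I would build $\tau = \tau'_0, \dots, \tau'_n$ in the same way with $\alpha'$ a vertex cycle of $\tau'_n$. By \refthm{QuasiConvexMM} the sets $V = \bigcup_i V(\tau_i)$ and $V' = \bigcup_j V(\tau'_j)$ are $\Quasi$--quasi-convex, and since both contain $V(\tau)$ their union is $(\Quasi + \Hyp)$--quasi-convex. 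Because $\alpha \in V$ and $\alpha' \in V'$, hyperbolicity of $\calC(S)$ (\refthm{Hyperbolic}) then puts every vertex of $[\alpha,\alpha']$ within $\Quasi + \Hyp$ of some vertex cycle $v$ of one of the tracks $\tau_i$ or $\tau'_j$.

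The remaining, and I expect hardest, step is to move each such vertex cycle $v$ a bounded distance off $\calC(\tau)$. Here I would set $\sigma = \supp(v,\tau)$, a subtrack of $\tau$. By \reflem{Vertex} the support of $v$ in $\tau_i$ (or $\tau'_j$) is a simple closed curve or a barbell, so $v$ is carried by a subsurface far from filling $S$; hence $v$ does not fill $S$, and since $v$ has full support in $\sigma$, the subtrack $\sigma$ cannot be maximal. If $\sigma$ is large, then \refprop{Closing}, applied to the pair $(\tau, v)$, directly yields an essential, non-peripheral curve $\beta$ with $i(v,\beta) \le 1$ that is not carried by $\tau$, so $d_S(v,\beta) \le 2$. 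If $\sigma$ is not large, then some component of $S - n(\sigma)$ is neither a disk nor a peripheral annulus, and I would re-run the local diagonal-and-completion argument behind \refprop{Closing} inside that component to produce $\beta$ with $i(v,\beta) \le 1$ not carried by $\tau$. Either way $v$ is within distance $2$ of $\calC(S) - \calC(\tau)$; together with the first paragraph this shows that all of $[\gamma,\gamma']$ lies in the $(\Quasi + \Hyp + 2)$--neighborhood of $\calC(S) - \calC(\tau)$, which is the theorem.

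I expect the main obstacle to be exactly this last step: making \refprop{Closing} (or a close variant of its proof) do uniform work for every vertex cycle that can appear along the two splitting sequences. The proposition is stated only for a carried curve whose support is large but not maximal, so one must separately check that vertex cycles never have maximal support (the ``smallness'' remark above) and one must handle by hand the vertex cycles whose support $\sigma$ in $\tau$ is not large, where \refprop{Closing} cannot be quoted as a black box and one has to verify that the curve built inside a complementary subsurface is not carried by $\tau$ itself, rather than merely by $\sigma$. A secondary point needing care is arranging both splitting sequences to begin at $\tau$ — so that $V$ and $V'$ overlap in $V(\tau)$ — while ending with $\alpha$, respectively $\alpha'$, as an honest vertex cycle, which is why the ``subtrack'' clause of \refthm{QuasiConvexMM} is used.
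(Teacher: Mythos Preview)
Your overall architecture matches the paper's, but the step you flag as hardest --- moving each vertex cycle off $\calC(\tau)$ --- has a genuine gap, and not where you think it is.

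You argue that for any vertex cycle $v \in V(\tau_i)$ the support $\sigma = \supp(v,\tau)$ is not maximal, reasoning: $\supp(v,\tau_i)$ is a circle or barbell, so $v$ does not fill $S$, so $\sigma$ cannot be maximal. The last implication is false. A maximal track $\tau$ can carry a curve with full support that does not fill $S$; \reflem{Basic} says only that curves disjoint from such a $v$ are \emph{also carried}, not that none exist. Concretely, if $\tau$ is the invariant track of a pseudo-Anosov, the vertex cycles of deep splittings $\tau_i$ approximate the unstable lamination and have full support in $\tau$. So for such $v$ you cannot invoke \refprop{Closing}, and your proposed fallback (``re-run the diagonal argument'') does not address maximal support at all.

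The paper sidesteps this by not treating vertex cycles one at a time. Using \reflem{Vertex} it selects a single chain $\alpha_n = \alpha,\ \alpha_{n-1},\ \dots,\ \alpha_0$ with $\alpha_i \in V(\tau_i)$ and $\alpha_i \carr \supp(\alpha_{i+1},\tau_i)$, so that the supports $\sigma_i = \supp(\alpha_i,\tau)$ are \emph{nested}: $\sigma_i \subset \sigma_{i+1}$. Non-maximality of the top $\sigma_n = \supp(\alpha,\tau)$ comes from the contrapositive of \reflem{Basic} applied to the non-carried predecessor $\beta$ of $\alpha$ on the geodesic --- this is exactly where the hypothesis $\gamma,\gamma' \notin \calC(\tau)$ enters the argument --- and nesting then forces every $\sigma_i$ to be non-maximal. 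The same nesting handles the non-large case without re-running \refprop{Closing}: if $m$ is the last index with $\sigma_m$ not large, every $\alpha_i$ with $i \le m$ is disjoint from a common essential curve in $S - \sigma_m$, so they all sit within distance $2$ of $\alpha_m$. Finally \reflem{Diameter} transfers the bound from the chosen $\alpha_i$ to all of $V(\tau_i)$. Your reduction to $\tau$ large is harmless but does not help with either issue.
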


%%% Suppose $\sigma$ is not large. Then, $\text{diam}(\calC(\sigma))
%%% \leq 2$.

\begin{proof}
We may assume $\chi(S) \leq -2$, and that $S$ is not a four-holed
sphere.
%%% Thus adjacency in \calC is disjointness (and not isotopic).
Suppose $\gamma, \gamma' \in \calC(S)$ are not carried by $\tau$.  Fix
a geodesic $[\gamma, \gamma']$ in $\calC(S)$.  If $[\gamma, \gamma']$
is disjoint from $\calC(\tau)$ there is nothing to prove.

So, instead, suppose $\alpha$ and $\alpha'$ are the first and last
curves, along $[\gamma, \gamma']$, carried by $\tau$.  Let $\beta$ be
the predecessor of $\alpha$ in $[\gamma, \gamma']$ and let $\beta'$ be
the successor of $\alpha'$.  Thus, $\beta$ and $\beta'$ are not
carried by $\tau$.  The contrapositive of \reflem{Basic} now implies
that the tracks $\supp(\alpha, \tau)$ and $\supp(\alpha', \tau)$ are
not maximal.
%%% This is where the assumption S \neq S_{1,1}, S \neq S_{0,4} is
%%% used.

%%% Changes in this paragraph. 
For the moment, we fix our attention on $\alpha$.  We choose a
splitting and shifting sequence $\{\tau_i\}_{i = 0}^n$ with the
following properties:
\begin{itemize}
\item
$\tau_0 = \tau$,
\item
for all $i$, the curve $\alpha$ is carried by $\tau_i$, and 
\item
$\supp(\alpha, \tau_n)$ is a simple closed curve.
\end{itemize}
We find a similar sequence $\{\tau_i'\}$ for $\alpha'$.

Let $V = \bigcup V(\tau_i)$ be the vertices of the splitting sequence
$\{\tau_i\}$; define $V'$ similarly.  The hyperbolicity of $\calC(S)$
(\refthm{Hyperbolic}) and the quasi-convexity of vertex sets
(\refthm{QuasiConvexMM}) imply the geodesic $[\alpha, \alpha']$ lies
within a $\Quasi + \Hyp$--neighborhood of $V \cup V'$.  To finish the
proof we must show that every vertex of $V$ (and of $V'$) is close to
a non-carried curve of $\tau$.

Using \reflem{Vertex} twice we may pick vertex cycles $\alpha_i \in
V(\tau_i)$ so that:
\begin{itemize}
\item
$\alpha_n = \alpha$ and
\item
$\alpha_i \carr \supp(\alpha_{i+1}, \tau_i)$.
\end{itemize}
%%% Claim - there exists such $\alpha_i$.  Pf: If $\supp(\alpha_{i+1},
%%% \tau_i)$ is a loop or a barbell, then $\alpha_i = \alpha_{i+1}$.
%%% If the support is larger, then it contains a loop or barbell and
%%% this gives $\alpha_i$.
Define $\sigma_i = \supp(\alpha_i, \tau)$.  By construction
$\supp(\alpha_i, \tau_i) \subset \supp(\alpha_{i+1}, \tau_i)$.  If we
fold backwards along the sequence then, the former track yields
$\sigma_i$ while the latter yields $\sigma_{i+1}$.  We deduce
$\sigma_i \subset \sigma_{i+1}$.  Recall that $\sigma_n =
\supp(\alpha, \tau)$ is not maximal.  Thus none of the $\sigma_i$ are
maximal.

Let $m = \max \big\{ \ell \st \mbox{$\sigma_\ell$ is small} \big\}$.
Fix any curve $\omega \in \calC(S)$ disjoint from $\sigma_m$.  Using
$\omega$ we deduce $d_S(\alpha_i, \alpha_m) \leq 2$, for any $i \leq
m$.

If $m = n$ then \reflem{Diameter} implies the set $V = \bigcup V(\tau_i)$
lies within a $\DiamV + 3$--neighbor\-hood of $\beta$, and we are done.

So we may assume that $m < n$.  In this case \reflem{Diameter} implies
the set $\bigcup_{i=0}^m V(\tau_i)$ lies within a $2\DiamV+
2$--neighborhood of $\alpha_{m+1}$.  Recall $\alpha_i \carr \tau$ and
$\sigma_i = \supp(\alpha_i, \tau)$ is assumed to be a large, yet not
maximal, subtrack of $\tau$.  Thus we may apply \refprop{Closing} to
obtain a curve $\beta_i$ so that:
\begin{itemize}
\item
$\beta_i \in \calC(S) - \calC(\tau)$ and 
\item
$i(\alpha_i, \beta_i) \leq 1$.
\end{itemize}
Applying \reflem{Diameter} we deduce, whenever $i > m$, that
$V(\tau_i)$ lies within a $\DiamV + 2$--neighborhood of $\beta_i$.

The same argument applies to the splitting sequence from $\tau$ to
$\alpha'$.  This completes the proof of the theorem.
\end{proof}

\section{Efficient position}

In order to prove \refprop{Closing}, we here give criteria to show
that a curve $\beta$ \emph{cannot} be carried by a given track $\tau$.
We state these in terms of \emph{efficient position}, defined
previously in~\cite[Definition 2.3]{MasurEtAl12}.  See
also~\cite[Definition~3.2]{Takarajima00a}.

Suppose $\tau$ is a train track and $N = N(\tau)$ is a tie
neighborhood.  A simple arc $\gamma$, properly embedded in $N$, is a
\emph{carried arc} if it is transverse to the ties and disjoint from
$\bdy_h N$.  

\begin{definition}
\label{Def:EffPos}
Suppose $\beta \subset S$ is a properly embedded arc or curve which is
transverse to $\bdy N$ and disjoint from $\bdy \bdy_v N$, the corners
of $N$.  Then $\beta$ is in \emph{efficient position} with respect to
$\tau$, written $\beta \eff \tau$, if
\begin{itemize}
\item
every component of $\beta \cap N(\tau)$ is carried or is a tie and
\item 
every component of $S - n(\beta \cup \tau)$ has negative index or is
a rectangle.
%%% No annuli allowed.
\end{itemize}
\end{definition}

Here $n(\beta \cup \tau)$ is a shorthand for $n(\beta) \cup n(\tau)$,
where the ties of $n(\beta)$ are either subties of, or orthogonal to,
ties of $n(\tau)$.
%%% Note that carried curves are automatically in efficient position.
An index argument proves if $\beta \eff \tau$ then $\beta$ is
essential and non-peripheral.  See~\cite[Lemma~2.5]{MasurEtAl12}.

\begin{criterion}
\label{Cri:AcrossArc}
Suppose $\beta \eff \tau$ is a curve.  Orient $\beta$.  Suppose there
are regions $L$ and $R$ of $S - n(\beta \cup \tau)$ and a component
$\beta_M \subset \beta - n(\tau)$ with the following properties.
\begin{itemize}
\item
$L$ and $R$ lie immediately to the left and right, respectively, of
  $\beta_M$ and
\item
$L$ and $R$ have negative index. 
\end{itemize}
Then any curve isotopic to $\beta$ is not carried by $\tau$.
\end{criterion}
%%% Note that we allow L = R.

\begin{figure}[htbp]
\labellist
\small\hair 2pt
\pinlabel {$L$} at 107 107
\pinlabel {$R$} at 163 163
\pinlabel {$\beta_M$} [tr] at 166 110
\endlabellist
\includegraphics[height = 7 cm]{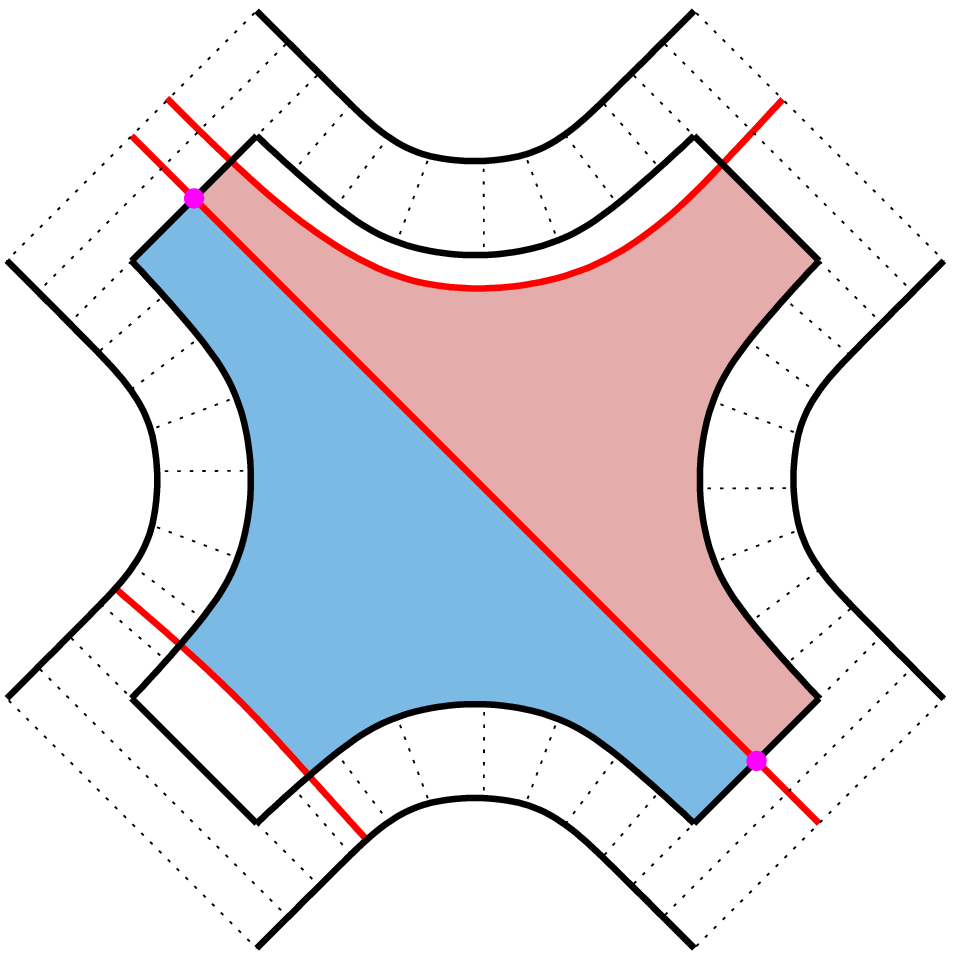}
\qquad
\begin{minipage}{0.33\textwidth}
\vspace{-6.5cm}
\labellist
\small\hair 2pt
%\pinlabel {$N(\tau)$} [bl] at 128 56
\pinlabel {$L$} at 32 119
\pinlabel {$R$} at 94 28
\pinlabel {$\beta_I$} [l] at 65 73
\endlabellist
\includegraphics[height = 5cm]{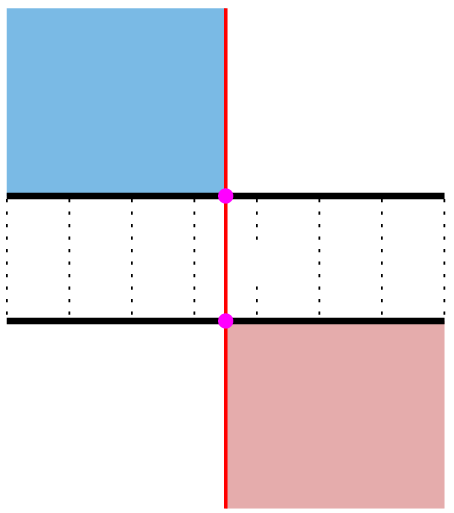}
\end{minipage}
\caption{Left: The regions $L$ and $R$ are both adjacent to the arc
  $\beta_M \subset \beta - n(\tau)$.  Right: A corner of $L$ and of
  $R$ meet the tie $\beta_I \subset \beta \cap N(\tau)$.}
\label{Fig:Across}
\end{figure}

\begin{proof}
Suppose, for a contradiction, that $\beta$ is isotopic to $\gamma
\carr \tau$.  We now induct on the intersection number $|\beta \cap
\gamma|$.

%%% Cite bigon criterion?

In the base case $\beta$ and $\gamma$ are disjoint; thus $\beta$ and
$\gamma$ cobound an annulus $A \subset S$.  Since $\beta$ and $\gamma$
are in efficient position with respect to $\tau$, the intersection $A
\cap N(\tau)$ is a union of rectangles, so has index zero.  However,
one of $L$ or $R$ lies inside of $A - N(\tau)$.  This contradicts the
additivity of index.

In the induction step, $\beta$ and $\gamma$ cobound a bigon $B \subset
S$.  Since $\gamma$ is carried, the two corners $x$ and $y$ of $B$ lie
inside of $N(\tau)$.  Let $\beta_x$ be the component of $\beta \cap N$
that contains $x$.  We call $x$ a \emph{carried} or \emph{dual} corner
as $\beta_x$ is a carried arc or a tie.  We use the same terminology
for $y$.

If $x$ is a carried corner then move along $\gamma \cap \bdy B$ a
small amount, let $I_x$ be the resulting tie, and use $I_x$ to cut a
triangle (containing $x$) off of $B$ to obtain $B'$.  Do the same at
$y$ to obtain $B''$.  Now, if both $x$ and $y$ are dual corners then
$B'' = B$ is a bigon.  If exactly one of $x$ or $y$ is a dual corner
then $B''$ is a triangle.  In either of these cases $\ind(B'')$ is
positive, contradicting the assumption that $\beta$ is in efficient
position.

So suppose both $x$ and $y$ are carried corners of $B$; thus $B''$ is
a rectangle.  Thus $B''$ has index zero.  Recall that $\beta_M$ is a
subarc of $\beta$ meeting both $R$ and $L$.  Since neither $L$ or $R$
lie in $B''$ deduce that $\beta_M$ is disjoint from $B''$.  We now
define $\beta_B = \beta \cap B$ and $\gamma_B = \gamma \cap B$, the
two sides of $B$.  We define $\beta'$ to be the curve obtained from
$\beta$ by isotoping $\beta_B$ across $B$, slightly past $\gamma_B$.
So $\beta'$ is isotopic to $\beta$, is in efficient position with
respect to $\tau$, has two fewer points of intersection with $\gamma$,
and contains $\beta_M$.  Thus $\beta_M$ is adjacent to two regions
$L'$ and $R'$ of $S - n(\beta' \cup \tau)$ of negative index, as
desired.  This completes the induction step and thus the proof of the
criterion.
\end{proof}

\refcri{AcrossArc} is not general enough for our purposes.  We also
need a criterion that covers a situation where the regions $L$ and $R$
are not immediately adjacent.  

\begin{criterion}
\label{Cri:AcrossCorner}
Suppose $\beta \eff \tau$ is a curve.  Orient $\beta$.  Suppose there
are regions $L$ and $R$ of $S - n(\beta \cup \tau)$ and a tie $\beta_I
\subset \beta \cap N(\tau)$ with the following properties.
\begin{itemize}
\item
$L$ and $R$ lie to the left and right, respectively, of
  $\beta$, 
\item
the two points of $\bdy \beta_I$ are corners of $L$ and $R$, and 
\item
$L$ and $R$ have negative index. 
\end{itemize}
Then any curve isotopic to $\beta$ is not carried by $\tau$. \qed
\end{criterion}

The proof of \refcri{AcrossCorner} is almost identical to that of
\refcri{AcrossArc} and we omit it.  See \reffig{Across} for local
pictures of curves $\beta \eff \tau$ satisfying the two criteria.
%%% Note that the criteria are essentially local...

\section{Efficient and crossing diagonals}

The next tool needed to prove \refprop{Closing} is the existence of
\emph{crossing diagonals:} efficient arcs that cannot isotoped to be
carried.

%%% The material in this section is generalized by [MMS] and by [T].
%%% But we need so much less than the general case, and this proof is
%%% sufficiently simpler...

Let $\tau$ be a train track.  Suppose $\sigma \subset \tau$ is a
subtrack.  We take $N(\sigma) \subset N(\tau)$ to be a tie
sub-neighborhood, as follows.
%%% Not in italics, because never reused. 
%%% Also - sub-tie-neighborhood looks bad. 
\begin{itemize}
\item
Every tie of $N(\sigma)$ is a subarc of a tie of $N(\tau)$.  
\item
The horizontal boundary $\bdy_h N(\sigma)$ is 
\begin{itemize}
\item
disjoint from $\bdy_h N(\tau)$ and 
\item
transverse to the ties of $N(\tau)$.
\end{itemize}
\item
Every component of $\bdy_v N(\sigma)$ contains a component of $\bdy_v
N(\tau)$.
%%% Thus, just one component, and the containment is proper.
\end{itemize}

Now suppose that $Q$ is a component of $S - n(\sigma)$.  We define
$N(\tau, Q) = N(\tau) \cap Q$.  We say that a properly embedded arc
$\delta \subset Q$ is a \emph{diagonal} of $Q$ if 
\begin{itemize}
\item
$\bdy \delta$ lies in $\bdy_v Q$, missing the corners,
\item
$\delta$ is orthogonal to $\bdy Q$, and
\item
all components of $Q - n(\delta)$ have negative index.
\end{itemize}
A diagonal $\delta$ is \emph{efficient} if it satisfies
\refdef{EffPos} with respect to $N(\tau, Q)$.  An efficient diagonal
$\delta$ is \emph{short} if one component $H$ of $Q - n(\delta)$ is a
hexagon.
% Could define essential and non-periph for diagonals. 
The hexagon $H$ meets three (or two) components of $\bdy_v Q$ and
properly contains one of them, say $v$.  In this situation we say
$\delta$ \emph{cuts} $v$ off of $Q$.  In the simplest example a short
diagonal $\delta \subset Q$ is carried by $N(\tau, Q)$.  

We say an efficient diagonal $\delta$ is a \emph{crossing diagonal} if
there is
\begin{itemize}
\item
a subarc $\delta_M$ (or $\delta_I$) and
\item
regions $L$ and $R$ of $Q - (\delta \cup n(\tau))$
\end{itemize}
satisfying the hypotheses of \refcri{AcrossArc} or
\refcri{AcrossCorner}.  Deduce, if $\beta \eff \tau$ is a curve
containing a crossing diagonal $\delta$, that any curve isotopic to
$\beta$ is not carried by $\tau$.

%%% The usual index argument shows that a crossing
%%% diagonal is essential and non-peripheral in $Q$.  

\begin{lemma}
\label{Lem:ShortDiagonal}
Suppose $\sigma \subset \tau$ is a large subtrack.  Let $Q$ be a
component of $S - n(\sigma)$ that is not a hexagon or a once-holed
bigon.  Then for any component $v \subset \bdy_v Q$ there is a short
diagonal $\delta \subset Q$ that cuts $v$ off of $Q$.

Furthermore $\delta$ is properly isotopic, relative to the corners of
$Q$, to a carried or a crossing diagonal.
\end{lemma}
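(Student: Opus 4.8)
The plan is to construct the short diagonal $\delta$ cutting off $v$ in two stages: first build it as an abstract properly embedded arc in $Q$ orthogonal to $\bdy Q$, then isotope it into efficient position with respect to $N(\tau, Q)$ while controlling the combinatorics. For the existence of a short diagonal: since $\sigma$ is large, $Q$ is a disk or peripheral annulus with boundary pattern consisting of arcs of $\bdy_v Q$ (ties) alternating with arcs of $\bdy_h N(\sigma)$, so $Q$ is a polygon (or once-holed polygon) whose index is $\chi(Q) - c_-(Q)/4 < 0$, where the inward corners are the $4$ corners where a vertical boundary arc meets a horizontal one times the number of $\bdy_v$ components. Given a chosen component $v \subset \bdy_v Q$, I take $\delta$ to be an arc that together with $v$ and the two adjacent horizontal arcs cuts off a hexagon $H$ — i.e. an arc connecting the two $\bdy_v$-components of $\bdy_v Q$ flanking $v$ (or, if $Q$ is too small, an arc that cuts $v$ off with the complement being the rest). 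One checks $\ind(H) = 0$ exactly when $H$ is a hexagon: $H$ has $\chi = 1$, and six inward corners (four from the two flanking ties, two new ones where $\delta$ is orthogonal to $\bdy Q$... wait, $\delta$ meets $\bdy_v Q$ orthogonally, creating corners), so $\ind(H) = 1 - 6/4 + 0/4$... I should instead track this carefully: $\delta$ meets $\bdy_v Q$ at two points, each contributing corners to $H$, and the count must come out so that $\ind(H) = 0$ (hexagon) and $\ind(Q - n(\delta) - H) = \ind(Q) < 0$ by additivity of index. Since $Q$ is not already a hexagon or once-holed bigon, there is genuine room to do this, and the hypothesis that $Q$ is not a once-holed bigon handles the peripheral-annulus case (where the complement of $H$ must still have negative index, which fails precisely for the once-holed bigon).

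For the ``furthermore'' clause: starting from the abstract short diagonal $\delta$, I isotope it (relative to corners, as allowed) to remove excess intersections with the ties of $N(\tau, Q)$ and with $\bdy_h N(\tau)$, pushing it into efficient position. The standard move is: if $\delta$ meets $\bdy_h N(\tau)$ in a way that bounds a bigon or forms a positive-index piece with $\tau$, isotope across it; and arrange each component of $\delta \cap N(\tau, Q)$ to be either carried or a tie. This is the same kind of innermost-bigon / index-reduction argument used in the proof of \refcri{AcrossArc} and in \cite[Lemma~2.5]{MasurEtAl12}. The outcome is an efficient diagonal $\delta'$ properly isotopic to $\delta$ with the same endpoints. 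Then I run a dichotomy on $\delta'$: either every component of $S - n(\tau)$ met by $\delta'$, together with $\delta'$, has negative index and $\delta'$ is carried by $N(\tau,Q)$ in the sense that the hexagon side $H$ survives — in which case $\delta'$ is a carried diagonal; or else efficient position has forced a region of non-negative index, which by \refdef{EffPos} must be a rectangle, and the two regions $L,R$ flanking some subarc $\delta_M$ or tie $\delta_I$ of $\delta'$ both have negative index, giving exactly the hypotheses of \refcri{AcrossArc} or \refcri{AcrossCorner} — so $\delta'$ is a crossing diagonal.

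The main obstacle I expect is the second step: showing that when $\delta'$ is \emph{not} carried, the efficient position data actually produces flanking regions $L$ and $R$ \emph{both} of negative index sitting on opposite sides of a single arc $\delta_M$ or tie $\delta_I$. A priori, efficient position only forbids positive-index complementary regions and allows rectangles, so one must argue that if $\delta'$ were carried by $N(\tau, Q)$ it would contradict $\delta'$ being short / cutting off $v$, and conversely that the obstruction to being carried localizes to a single branch or switch of $\tau$ that $\delta'$ crosses ``the wrong way.'' Concretely, I will trace $\delta'$ branch by branch through $N(\tau,Q)$: being short, $\delta'$ runs parallel to ties and carried arcs except possibly where it must double back; the first place it fails to be carried (fails transversality to ties with consistent coorientation, i.e. crosses a switch against the branching) is where the flanking regions $L, R$ appear, and an index count (using that their union with the rectangle pieces reconstitutes a negative-index piece of $S - n(\sigma)$) shows both have negative index. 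The bookkeeping of corners and the case split between the ``$\delta_M$'' and ``$\delta_I$'' subcases (arc crossing vs.\ tie crossing) is where the real work lies; everything else is routine index arithmetic and innermost-disk surgery.
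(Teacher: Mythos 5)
Your plan has the right high-level shape (cut off a hexagon, put the arc in efficient position, then split into carried vs.\ crossing) but it omits the one step that actually does the work: an explicit construction of $\delta$ that controls \emph{which} regions of $Q - n(\delta\cup\tau)$ the arc is adjacent to. The paper does not build an abstract arc and then isotope it into efficient position; it builds $\delta$ directly as $Q \cap \bdy N(X)$ for a carefully chosen set $X = h_u \cup N_u' \cup v \cup N_w' \cup h_w$, where $N_u'$ and $N_w'$ are the pieces of $N(\tau,Q)$ between $v$ and the \emph{closest} ties $I_u, I_w$ ``facing'' $v$ along the adjacent horizontal arcs. This choice is what guarantees that $\epsilon = \delta - n(\tau)$ either is a single arc with a hexagon on the $v$-side (giving Criterion~\ref{Cri:AcrossArc} or a carried diagonal depending on the sign of the index of the other adjacent region), or decomposes into pentagons at the two ends and rectangles in between (giving Criterion~\ref{Cri:AcrossArc}, or Criterion~\ref{Cri:AcrossCorner} in the exceptional $u = w$ case). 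Without pinning down $\delta$ this precisely, ``isotope to efficient position and trace branch by branch'' does not obviously produce two flanking regions of negative index across a \emph{single} subarc or tie: an arbitrary efficient representative could pass through several different components of $S - n(\tau)$ in a way that puts a rectangle on one side of every candidate $\delta_M$, and your index count ``reconstituting a negative-index piece of $S - n(\sigma)$'' is too coarse to rule that out.

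You flag this yourself at the end (``the bookkeeping of corners and the case split\dots is where the real work lies''), and that honesty is appropriate: that bookkeeping is the lemma. Two smaller points. First, the existence part is cheaper than your index computation suggests --- once you have identified $u, h_u, v, h_w, w$, the boundary of a regular neighborhood of $h_u \cup v \cup h_w$ already cuts off a hexagon, and the hypothesis that $Q$ is not a hexagon or once-holed bigon is exactly what keeps the complementary region's index negative; no general polygon analysis is needed. Second, your dichotomy is stated as ``carried or efficient-but-crossing,'' but the paper's dichotomy is finer: when $\epsilon$ is a single arc, the region $L$ opposite the hexagon can be a \emph{rectangle}, and then $\delta$ is isotoped through that rectangle to become carried --- so carried diagonals arise from a degeneration of the crossing case, not as a separate a priori possibility. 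Your sketch does not distinguish these, which would matter if you tried to make the index count precise.
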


%%% Warning to typesetters - the following hack was done to place the
%%% wrapfigure correctly.  If you change this you will almost
%%% certainly need to reposition \reffig{Rectangle}.
\noindent\textit{Proof.}
%\begin{proof}
The orientation of $S$ induces an oriention on $Q$ and thus of the
boundary of $Q$.  Let $u$ and $w$ be the components of $\bdy_v Q$
immediately before and after $v$.  (Note that we may have $u = w$.  In
this case $Q$ is a once-holed rectangle.)  Let $h_u$ and $h_w$ be the
components of $\bdy_h Q$ immediately before and after $v$.

Let $N_u$ be the union of the ties of $N(\tau, Q)$ meeting $h_u$.  As
usual, $N_u$ is a union of rectangles.  (See \reffig{NearHorizontal}
for one possibility for $N_u$.)  Let $I$ be a tie of $N_u$, meeting
the interior of $h_u$.  Suppose that $I$ contains a component of
$\bdy_v N_u$.  Thus $I$ locally divides $N_u$ into a pair of
half-rectangles, one large and one small.  When the small
half-rectangle is closer to $v$ than it is to $u$ (along $h_u$), we
say $I$ \emph{faces} $v$.  Among the ties of $N_u$ facing $v$, let
$I_u$ be the one closest to $v$.  (If no tie faces $v$ we take $I_u =
u$.)

\begin{figure}[htbp]
\labellist
\small\hair 2pt
\pinlabel {$u$} [r] at 1 47
\pinlabel {$v$} [l] at 399 47
\pinlabel {$I_u$} [l] at 218 63
\pinlabel {$\delta$} [bl] at 385 83
\endlabellist
\includegraphics[width = 0.9\textwidth]{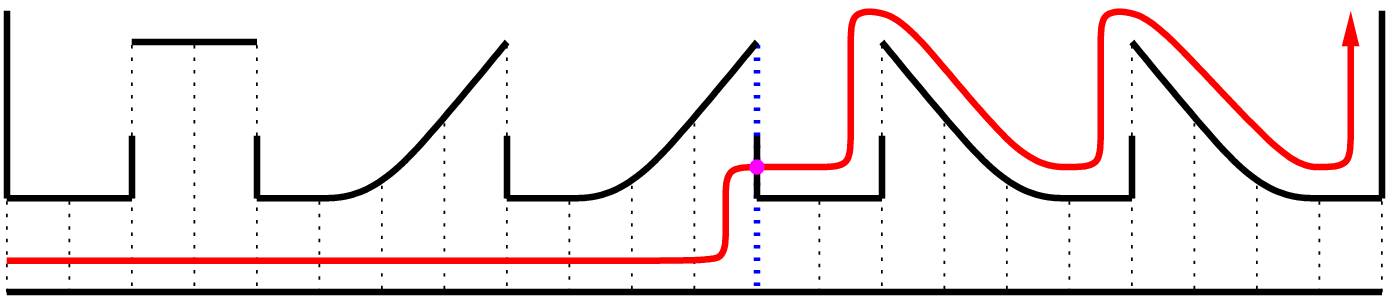}
\caption{One possible shape for $N_u$, the union of all ties meeting
  $h_u \subset \bdy_h Q$.}
\label{Fig:NearHorizontal}
\end{figure}

With $I_u$ in hand, let $N_u'$ be the closure of the component of $N_u
- I_u$ that meets $v$.  We define $I_w$ and $N_w'$ in the same way,
with respect to $h_w$.

\begin{wrapfigure}[13]{r}{0.45\textwidth}
\vspace{-3pt}
\centering 
\labellist
\small\hair 2pt
\pinlabel {$\delta$} [l] at 136 92
\endlabellist
\includegraphics[width = 0.35\textwidth]{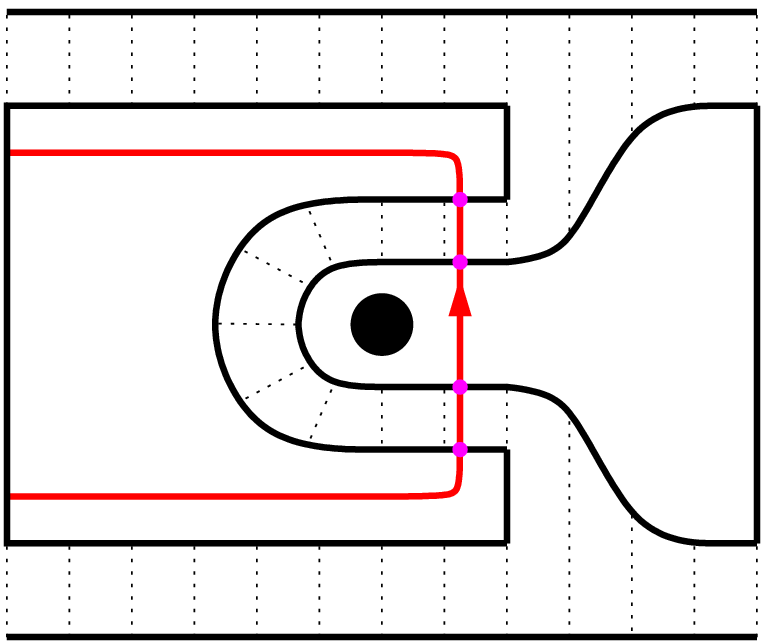}
\caption{We have properly isotoped $\delta$ to simplify the figure.}
\label{Fig:Rectangle}
\end{wrapfigure}

Consider the set $X = h_u \cup N_u' \cup v \cup N_w' \cup h_w$.  Let
$N(X)$ be a small regular neighborhood of $X$, taken in $S$, and set
$\delta = Q \cap \bdy N(X)$; see \reffig{NearHorizontal}.  Note that
$\delta$ cuts $v$ off of $Q$.  Orient $\delta$ so that $v$ is to the
right of $\delta$. 

We now prove that $\delta$ is in efficient position, after an
arbitrarily small isotopy.  The subarc of $\delta_u \subset \delta$
between $u$ and $I_u$ is carried; the same holds for the subarc
$\delta_w$ between $I_w$ and $w$.  (If $I_u = u$ then we take
$\delta_u = \emptyset$ and similarly for $\delta_w$.)  All components
of $\delta \cap N(\tau)$, other than $\delta_u$ and $\delta_w$, are
ties.

Consider $\epsilon = \delta - n(\tau)$.  If $\epsilon$ is connected,
then $\epsilon$ cuts a hexagon $R$ off of $Q - n(\tau)$.  By
additivity of index the region $L \subset Q - (\delta \cup n(\tau))$
adjacent to $R$ has index at most zero.  If $L$ has index zero, it is
a rectangle; we deduce that $\delta$ is isotopic to a carried
diagonal. If $L$ has negative index then $\delta$ is a crossing
diagonal, according to \refcri{AcrossArc}. 

Suppose $\epsilon = \delta - n(\tau)$ is not connected.  We deduce
that the first and last components of $\epsilon$ cut pentagons off of
$Q - n(\tau)$; all other components cut off rectangles.  When $u \neq
w$ then every region of $Q - n(\tau)$ contains at most one component
of $\epsilon$.  In this case an index argument proves that $\delta$ is
a crossing diagonal, according to \refcri{AcrossArc}.  If $u = w$ then
$Q$ is a once-holed rectangle as shown in \reffig{Rectangle}.  In this
case $\delta$ is a crossing diagonal, according to
\refcri{AcrossCorner}. \qed
%\end{proof}
%%% See comments at matching \begin{proof}

\begin{lemma}
\label{Lem:CrossingDiagonal}
Suppose $\sigma \subset \tau$ is a large subtrack.  Let $Q$ be a
component of $S - n(\sigma)$ that is not a hexagon or a once-holed
bigon.  Then $Q$ has a short crossing diagonal.
\end{lemma}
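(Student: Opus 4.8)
The plan is to argue by contradiction, using \reflem{ShortDiagonal} together with a close reading of its proof. Suppose $Q$ has no short crossing diagonal. If $Q - n(\tau) = Q$ — for instance if $\sigma = \tau$, or if $Q$ meets no branch of $\tau \setminus \sigma$ — then $N(\tau, Q)$ has empty interior, no diagonal of $Q$ is carried, and \reflem{ShortDiagonal} already supplies a crossing diagonal; so we may assume $Q - n(\tau)$ is a proper subsurface of $Q$. Fix a component $v \subset \bdy_v Q$ and let $\delta_v$ be the short diagonal produced by the proof of \reflem{ShortDiagonal}. By our standing assumption $\delta_v$ is not crossing, so it is isotopic to a carried diagonal; inspecting the case analysis in that proof, the only way this happens is that $\epsilon_v := \delta_v - n(\tau)$ is a single arc, lying in one component $P_v$ of $Q - n(\tau)$, which $\epsilon_v$ separates into a hexagon $R_v$ and a region $L_v$ of index zero. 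Since $L_v$ has corners it is a rectangle, hence a disk; therefore $P_v = R_v \cup n(\epsilon_v) \cup L_v$ is a disk, and additivity of index gives $\ind(P_v) = \ind(R_v) + \ind(L_v) = -\tfrac12$. Thus $P_v$ is a hexagon, and the same analysis applies to every component $v' \subset \bdy_v Q$.

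Next I would prove that $Q - n(\tau)$ is connected. Suppose not. Because $Q$ is connected and the rectangles of $N(\tau, Q)$ glue the components of $Q - n(\tau)$ together into $Q$, there is some vertical side $v$ of $Q$ whose neighboring configuration straddles a rectangle of $N(\tau, Q)$ joining two distinct complementary regions of $\tau$ inside $Q$ — here one uses that $\sigma$ is a proper subtrack of $\tau$, so some branch of $\tau \setminus \sigma$ is incident to a switch of $\sigma$ and hence to $\bdy_v Q$. Tracing the construction of $\delta_v$ in \reflem{ShortDiagonal}, the comb $X = h_u \cup N_u' \cup v \cup N_w' \cup h_w$ then forces $\delta_v$ to enter and leave $N(\tau)$ at least twice, so $\epsilon_v = \delta_v - n(\tau)$ is disconnected. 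But in that case the analysis in \reflem{ShortDiagonal} shows $\delta_v$ is a crossing diagonal, via \refcri{AcrossArc} or \refcri{AcrossCorner} — contradiction. Hence $Q - n(\tau)$ is a single component $P$; taking any vertical side $v$ of $Q$ we have $P_v = P$, so by the first paragraph $P$ is a hexagon. A routine index computation then gives $\ind(Q) = \ind(P) = -\tfrac12$, as the rectangles of $N(\tau, Q)$ contribute nothing. Since $\sigma$ is large, $Q$ is a disk or a peripheral annulus, and a disk (resp.\ peripheral annulus) of index $-\tfrac12$ is a hexagon (resp.\ once-holed bigon). This contradicts the hypothesis on $Q$ and proves the lemma.

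The step I expect to be the main obstacle is the connectivity claim for $Q - n(\tau)$: matching the local comb construction in the proof of \reflem{ShortDiagonal} — which sees only a single vertical side and its two neighboring horizontal sides — against the global branch structure of $\tau$ inside $Q$, so as to pin down a vertical side whose diagonal is forced to weave through $N(\tau)$ more than once. Along the way one must dispatch the degenerate configurations: the case $u = w$ (a once-holed rectangle, handled by \refcri{AcrossCorner}), the case of a branch of $\tau$ having a single complementary region on both sides, and, if $\tau$ is permitted to be disconnected, a component of $\tau$ floating inside $Q$ away from $\sigma$. Each of these is controlled by the same index bookkeeping used above — a rectangle has index zero, a complementary disk of a track has index at most $-\tfrac12$, and equality forces a hexagon — so I would set up the write-up by first using that bookkeeping to reduce to the case that every component of $Q - n(\tau)$ is a polygon, and only then carry out the connectivity argument.
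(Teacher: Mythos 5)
The first paragraph of your argument is sound, and it extracts from Lemma~\ref{Lem:ShortDiagonal} a useful fact that the paper does not state explicitly: if $\delta_v$ is carried, the component $P_v$ of $Q - n(\tau)$ containing $\epsilon_v$ is itself a hexagon (not just the sub-hexagon cut off by $\epsilon_v$). However, from that point your argument diverges from the paper and, I believe, breaks down.

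The central gap is your connectivity claim in the second paragraph. The argument given for it is not an argument: ``the comb $X$ then forces $\delta_v$ to enter and leave $N(\tau)$ at least twice'' is asserted, not proved, and it is not clear why a branch of $\tau \setminus \sigma$ that separates $Q - n(\tau)$ should be visible to the local comb $h_u \cup N_u' \cup v \cup N_w' \cup h_w$, which only sees the two horizontal sides adjacent to $v$. Worse, I think the claim is actually \emph{false} in the situation at hand. Your own first paragraph makes each $P_v$ a hexagon; but each $\epsilon_v$ cuts a hexagon $K_v$ off of $P_v$, and (as the paper observes) the $K_v$ for distinct $v$ are disjoint. A single hexagon cannot contain $n \geq 2$ disjoint hexagons cut off by properly embedded arcs, by index. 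So for $n \geq 2$ the sets $P_v$ must be pairwise distinct, and $Q - n(\tau)$ is \emph{disconnected}. A separate failure occurs when $Q$ is a peripheral annulus: the component of $Q - n(\tau)$ meeting $\bdy S$ cannot be a hexagon, so again $Q - n(\tau)$ cannot be the single hexagon your argument needs. Relatedly, your last step asserts that a peripheral annulus of index $-\tfrac12$ is a once-holed bigon; by the index formula that is a once-holed \emph{monogon} (two corners), which the hypothesis on $Q$ does not exclude, so even the final contradiction would not land.

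The paper avoids all of this: instead of trying to make $Q - n(\tau)$ connected, it uses that the hexagons $K_v$ (cut off by the carried $\delta_v$) are pairwise disjoint, so additivity of index gives $\ind(Q) \leq -n/2$, strictly less when $Q$ is a peripheral annulus because of the extra negative-index component meeting $\bdy S$. Comparing with $\ind(Q) = 1 - n/2$ (disk) or $-n/2$ (annulus) yields the contradiction. If you want to salvage your write-up, the right move is to drop connectivity entirely and instead argue that the $P_v$ are pairwise distinct (which follows from disjointness of the $K_v$ plus an index count inside a single hexagon), and then sum their indices as the paper does; the annulus case still needs the extra sentence about $\bdy S$.
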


\begin{proof}
Since $\sigma$ is large, $Q$ is a disk or a peripheral annulus.  Set
$n = |\bdy_v Q|$.  According to \reflem{ShortDiagonal}, for every
component $v \subset \bdy_v Q$ there is a short diagonal $\delta_v$
cutting $v$ off of $Q$.  Let $H_v \subset Q - \delta_v$ be the hexagon
to the right of $\delta_v$.  Also every $\delta_v$ is a carried or a
crossing diagonal.

Suppose for a contradiction that $\delta_v$ is carried, for each $v
\subset \bdy_v Q$.  Thus $K_v = H_v - n(\tau)$ is again a hexagon.  If
$u$ is another component of $\bdy_v Q$ then $K_u$ and $K_v$ are
disjoint.  Since index is additive, we find $\ind(Q) \leq
-\frac{n}{2}$.  This inequality is strict when $Q$ is a peripheral
annulus; this is because the component of $Q - n(\tau)$ meeting $\bdy
S$ must also have negative index.

On the other hand, if $Q$ is a disk then $\ind(Q) = 1 - \frac{n}{2}$;
if $Q$ is an annulus then $\ind(Q) = -\frac{n}{2}$.  In either case we
have a contradiction.
\end{proof}

\section{Closing up the diagonal}

After introducing the necessary terminology, we give the proof of
\refprop{Closing}.

Suppose $\tau \subset S$ is a track, and $N = N(\tau)$ is a tie
neighborhood.  Suppose that $I \subset N$ is a tie, containing a
component $u \subset \bdy_v N$.  Let $R$ be the large half-rectangle
adjacent to $I$.  For any unit vector $V(x)$ based at $x \in
\interior(I)$ we say $V(x)$ is \emph{vertical} if it is tangent to
$I$, is \emph{large} if it points into $R$, and is \emph{small}
otherwise.  Suppose $\alpha \carr \tau$ is a carried curve.  A point
$x \in \alpha \cap I$ is \emph{innermost on $I$} if there is a
component $\epsilon \subset I - (u \cup \alpha)$ so that the closure
of $\epsilon$ meets both $u$ and $x$.

Fix an oriented curve $\alpha \carr \tau$.  For any $x \in \alpha$, we
write $V(x, \alpha)$ for the unit tangent vector to $\alpha$ at $x$.
If $x, y \in \alpha$ then we take $[x, y] \subset \alpha$ to be (the
closure of) the component of $\alpha - \{x, y\}$ where $V(x, \alpha)$
points into $[x, y]$.  Note that $\alpha = [x, y] \cup [y, x]$.  Also,
we take $\alpha^{\op}$ to be $\alpha$ equipped with the opposite
orientation.  We make similar definitions when $\alpha$ is a arc.

\begin{proposition}
\label{Prop:Closing}
Suppose $\tau \subset S$ is a train track and $\alpha \carr \tau$ is a
carried curve.  Suppose $\supp(\alpha, \tau)$ is large, but not
maximal.  Then there is a curve $\beta \eff \tau$ so that $i(\alpha,
\beta) \leq 1$ and any curve isotopic to $\beta$ is not carried by
$\tau$.
\end{proposition}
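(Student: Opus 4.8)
The plan is to build $\beta$ by combining a short crossing diagonal with a short carried diagonal, closing the pair up into a curve along the boundary of $N(\sigma)$, where $\sigma = \supp(\alpha,\tau)$. First I would set $\sigma = \supp(\alpha,\tau)$ and note that, by hypothesis, $\sigma$ is a large but non-maximal subtrack of $\tau$. Largeness means every component of $S - n(\sigma)$ is a disk or peripheral annulus; non-maximality means there is some component $Q_0 \subset S - n(\sigma)$ that is neither a hexagon nor a once-holed bigon. Apply \reflem{CrossingDiagonal} to $Q_0$ to obtain a short crossing diagonal $\delta \subset Q_0$; let $v \subset \bdy_v Q_0$ be the component it cuts off. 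The two endpoints of $\delta$ lie on components $v'$ and $v''$ of $\bdy_v N(\sigma)$ (possibly $v' = v''$), each of which contains a component of $\bdy_v N(\tau)$; equivalently, each endpoint sits on a branch-end of $\sigma$ and $\alpha$ runs across that branch, so $\alpha$ passes through a definite point near each endpoint of $\delta$.

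Next I would close $\delta$ up using $\alpha$ itself. The idea from the sketch is that the closure is ``purely local'': from each endpoint $p$ of $\delta$, travel along the tie through $p$ until you hit the nearest strand of $\alpha$, then run alongside $\alpha$ (staying carried, i.e. transverse to ties) back to the tie through the other endpoint $q$ of $\delta$, and come in to meet $q$. This produces a simple closed curve $\beta$ that agrees with $\delta$ inside $Q_0$, agrees with a subarc of (a pushoff of) $\alpha$ outside $Q_0$, and makes two short ``turns'' along ties near $p$ and $q$. By construction every component of $\beta \cap N(\tau)$ is carried or is a tie, and outside $n(\tau)$ the only non-rectangle complementary regions are $L$ and $R$ coming from the crossing diagonal $\delta$; a short index computation — using additivity of index and the fact that all the regions introduced by running alongside $\alpha$ are rectangles — shows every component of $S - n(\beta \cup \tau)$ has negative index or is a rectangle, so $\beta \eff \tau$. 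Since $\beta$ contains the crossing diagonal $\delta$, the remark after \reflem{ShortDiagonal} (i.e.\ Criteria~\ref{Cri:AcrossArc} or~\ref{Cri:AcrossCorner}) gives immediately that no curve isotopic to $\beta$ is carried by $\tau$. For the intersection bound, note $\beta$ meets $\alpha$ only along the two tie-turns near $p$ and $q$; if one chooses, at each endpoint, to run along the \emph{innermost} strand of $\alpha$ on that tie and to push off $\alpha$ to the correct side, the two potential intersection points can be reduced — generically to at most one — giving $i(\alpha,\beta) \le 1$. One must also check $\beta$ is not null-homotopic or peripheral, but this is automatic from $\beta \eff \tau$ by the index argument cited after \refdef{EffPos}.

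The main obstacle I anticipate is the intersection count $i(\alpha,\beta) \le 1$, and more precisely choosing the closing arcs so that $\beta$ is \emph{simple} and meets $\alpha$ at most once while still remaining in efficient position. Running alongside $\alpha$ can in principle force $\beta$ to cross $\alpha$ twice (once near each endpoint of $\delta$), or to self-intersect if the subarc of $\alpha$ used is not embedded in the relevant tie-neighborhood; the fix is to be careful about which half-branch one exits into at each endpoint (large vs.\ small), to use the innermost-strand condition, and to exploit the freedom to route the closing arc on either side of $\alpha$. I expect the cleanest treatment distinguishes two cases: when $\delta$ itself is already carried near its endpoints (so one endpoint can be ``absorbed'' with no new intersection), versus when it is not, and in the worst case one genuinely picks up a single transverse intersection with $\alpha$. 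The remaining steps — verifying $\beta \eff \tau$ via index additivity, and invoking the crossing-diagonal criteria — are routine given \reflem{CrossingDiagonal} and the machinery of Section~4.
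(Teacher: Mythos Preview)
Your opening moves match the paper's: set $\sigma=\supp(\alpha,\tau)$, pick a non-hexagon, non-once-holed-bigon region $Q$, apply \reflem{CrossingDiagonal} to get a short crossing diagonal $\delta$, and then close $\delta$ up along innermost strands of $\alpha$ through short tie-segments.  That much is right and is exactly what the paper does.

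The genuine gap is in the closing step.  You say the two potential intersections ``can be reduced --- generically to at most one'' by choosing sides correctly, but ``generically'' hides a real obstruction.  Write $p\in u$ and $q\in w$ for the endpoints of $\delta$; then $V(p,\delta)$ is small and $V(q,\delta)$ is large.  To attach a carried subarc of $\alpha$ at $q$ via a tie and stay in efficient position, the innermost strand of $\alpha$ you land on must leave $q$ in the \emph{large} direction; dually at $p$ it must arrive in the small direction.  Orient $\alpha$ so the first innermost point $x_R$ on $J_u$ has $V(x_R,\alpha)$ small.  The paper then does a case analysis on the tangent directions at the remaining three innermost points $x_L,z_R,z_L$.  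When at least one of these is large, a direct closing of $\delta$ along a single subarc of $\alpha$ works and gives $i(\alpha,\beta)\le 1$; this is your picture.  But when \emph{all four} innermost tangents are small, no choice of side at $q$ lets you turn onto $\alpha$ without creating a bigon (so efficient position fails) or without forcing two crossings.  Your proposal does not cover this case, and the dichotomy you suggest (``carried near its endpoints'' vs.\ not) is not the relevant one.

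The paper's fix for the all-small case is not local to $Q$: one follows the two innermost strands of $\alpha$ from $J_w$ through a maximal embedded rectangle $R\subset N(\sigma)$ until they diverge at a new component $u'\subset\bdy_v N(\sigma)$, landing in a new region $Q'$.  There the innermost tangents at $u'$ are now both large, and one either closes up via an extra tie (if the adjacent innermost tangent is large) or invokes \reflem{ShortDiagonal} again to produce a second short diagonal $\delta'\subset Q'$ and closes through that.  Several degenerate configurations ($u'=v$, $Q'$ a once-holed bigon, $v'\subset w-\delta$, etc.) must be ruled out along the way, sometimes using $\chi(S)\le -2$.  So the missing idea is: extend $\delta$ by a carried arc $\epsilon$ through a rectangle to reach a region where the tangent directions cooperate, and be prepared to use a second short diagonal there.
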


\begin{proof}
Set $\sigma = \supp(\alpha, \tau)$.  Fix a component $Q$ of $S -
n(\sigma)$ that is not a hexagon or a once-holed bigon.  By
\reflem{CrossingDiagonal} there is a short crossing diagonal $\delta
\subset Q$.  Recall that $n(\delta)$ cuts a hexagon $H$ off of $Q$;
also, $\delta$ is oriented so that $H$ is to the right of $\delta$.
The hexagon $H$ meets three components $u, v, w \subset \bdy_v Q$.
The component $v$ is completely contained in $H$; also, we may have $u
= w$.  Let $p$ and $q$ be the initial and terminal points of $\delta$,
respectively.  Thus $p \in u$ and $q \in w$; also $V(p, \delta)$ is
small and $V(q, \delta)$ is large.  (Equivalently, $V(p, \delta)$
points into $Q$ while $V(q, \delta)$ points out of $Q$.)

Let $J_u$ and $J_w$ be the ties of $N(\sigma)$ containing $u$ and $w$.
Rotate $V(p, \delta)$ by $\pi/2$, counterclockwise, to get an
orientation of $J_u$.  We do the same for $J_w$.
%%% This is a bit confusing when u = w as, in this case, we are given
%%% one arc two different orientations.  Anyway.

Since $\sigma = \supp(\alpha, \tau)$, there are pairs of innermost
points $x_R, x_L \in \alpha \cap J_u$ and $z_R, z_L \in \alpha \cap
J_w$.  We choose names so that $x_R, p, x_L$ is the order of the
points along $J_u$ and so that $z_R, q, z_L$ is the order along $J_w$.
Now orient $\alpha$ so that $V(x_R, \alpha)$ is small.

\begin{figure}[htbp]
\labellist
\small\hair 2pt
\pinlabel {$\delta$} [b] at 137 59
\pinlabel {$p$} [r] at 36 57
\pinlabel {$q$} [l] at 238 57
\pinlabel {$x_R$} [br] at 36 31
\pinlabel {$x_L$} [br] at 36 85
\pinlabel {$z_R$} [bl] at 237 31
\pinlabel {$z_L$} [tl] at 237 91
\endlabellist
\includegraphics[width = 0.8\textwidth]{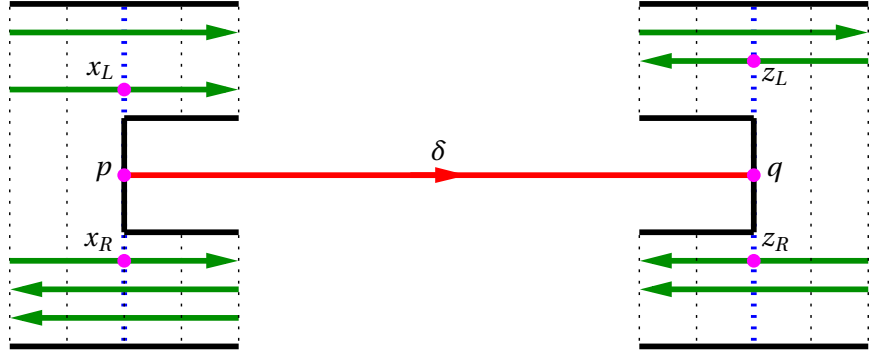}
\caption{In this example, all of the vectors $V(x_R, \alpha)$, $V(x_L,
  \alpha)$, $V(z_R, \alpha)$, and $V(z_L, \alpha)$ are small.}
\label{Fig:Delta}
\end{figure}

We divide the proof into two main cases: one of $V(x_L, \alpha)$,
$V(z_R, \alpha)$, $V(z_L, \alpha)$ is large, or all three vectors are
small.  In all cases and subcases our goal is to construct a curve
$\beta$ which contains $\delta$ and is, after an arbitrarily small
isotopy, in efficient position with respect to $N(\tau)$.  Since
$\beta$ contains $\delta$ one of \refcri{AcrossArc} or
\refcri{AcrossCorner} applies: any curve isotopic to $\beta$ is not
carried by $\tau$.

\subsection{A tangent vector to $\alpha$ at $x_L$, $z_R$, or $z_L$ is large.}

This case breaks into subcases depending on whether or not $u = w$.
Suppose first that $u \neq w$.

If $V(z_R, \alpha)$ is large, then consider the arcs $[q, z_R] \subset
J_w^{\op}$, $[z_R, x_R] \subset \alpha$, and $[x_R, p] \subset J_u$.
The curve
\[
\beta = \delta \cup [q, z_R] \cup [z_R, x_R] \cup [x_R, p]
\]
has the desired properties and satisfies $i(\alpha, \beta) = 0$.

If $V(z_L, \alpha)$ is large, then consider the arcs $[q, z_L] \subset
J_w$, $[z_L, x_R] \subset \alpha$, and $[x_R, p] \subset J_u$.  Then
\[
\beta = \delta \cup [q, z_L] \cup [z_L, x_R] \cup [x_R, p]
\]
has $i(\alpha, \beta) = 1$ because $\beta$ crosses, once, from the
right side to the left side of $[z_L, x_R]$.

Suppose now that $V(z_R, \alpha)$ and $V(z_L, \alpha)$ are small but
$V(x_L, \alpha)$ is large.  Consider the arcs $[p, x_L] \subset J_u$,
$[x_L, z_L] \subset \alpha$, and $[z_L, q] \subset J_w^{\op}$.  Then
\[
\beta = [p, x_L] \cup [x_L, z_L] \cup [z_L, q] \cup \delta^{\op}
\]
has $i(\alpha, \beta) = 0$.

We now turn to the subcase where $u = w$ and $V(x_L,\alpha)$ is large.
In this case $x_R = z_L$, $x_L = z_R$, and the points $x_R, p, q, x_L$
appear, in that order, along $J_u$.  Consider the arcs $[q, x_L]$ and
$[x_R,p] \subset J_u$ and $[x_L,x_R] \subset \alpha$.  Then
\[
\beta = \delta \cup [q, x_L] \cup [x_L, x_R] \cup [x_R, p].
\]
has $i(\alpha, \beta) = 0$. 

\subsection{The tangent vectors to $\alpha$ at $x_L$, $z_R$, and $z_L$
  are small.} 
%%% There is no solution just in $Q$, so we extend $\delta$ further.
Let $R \subset N(\tau)$ be the biggest rectangle, with embedded
interior, where
\begin{itemize}
\item
both components of $\bdy_v R$ are subarcs of ties,
\item
$[z_R, z_L] \subset J_w$ is a component of $\bdy_v R$, and
\item
$\bdy_h R = \alpha \cap R$.
\end{itemize}
Since the interior of $R$ is embedded, the vertical arc $(\bdy_v R) -
[z_R, z_L]$ contains a unique component $u' \subset \bdy_v N(\sigma)$;
also, the component $u'$ is not equal to $w$.  Pick a point $p'$ in
the interior of $u'$.  Let $\epsilon \subset R$ be a carried arc
starting at $q$, ending at $p'$, and oriented away from $q$.

Let $J_{u'}$ be the tie in $N(\tau)$ containing $u'$.  We orient
$J_{u'}$ by rotating $V(p',\epsilon)$ by $\pi/2$, counterclockwise.
Let $x_R'$ and $x_L'$ be the innermost points of $\alpha \cap J_{u'}$.
Note that $V(x_L', \alpha)$ and $V(x_R', \alpha)$ are both large.
Thus $u' \neq u$.  We have already seen that $u' \neq w$.

Let $Q'$ be the component of $S - n(\delta \cup \sigma)$ that contains
$u'$.  The orientation on $S$ restricts to $Q'$, which in turn induces
an orientation on $\bdy Q'$.  Let $v'$ be the component of $\bdy_v Q'$
immediately \emph{before} $u'$.

If $v' = u'$ then $Q'$ is a once-holed bigon, contradicting the fact
that $V(x_L', \alpha)$ and $V(x_R', \alpha)$ are both large.  

If $v' \subset u - \delta$ then $Q' = H \subset Q$ is the hexagon to
the right of $\delta$.  Thus $u' = v$.  In this case there is a curve
$\alpha' \subset R \cup H$ so that 
\begin{itemize}
\item
$\alpha' \cap R$ is a properly embedded arc with endpoints $z_R$ and
  $x'_R$ and
\item
$\alpha' - R$ is a component of $\bdy_h H$. 
\end{itemize}
Thus $\alpha'$ is isotopic to (the right side of) $\alpha$.  Now, if
$u \neq w$ then $\alpha'$ also meets the region $Q - (n(\delta) \cup
H)$, near $x_L$.  Thus $\alpha'$ is not contained in $R \cup H$, a
contradiction.  If $u = w$ then $Q$ is a once-holed rectangle.  In
this case $R \cup Q$, together with a pair of rectangles, is all of
$S$.  Thus $S$ is a once-holed torus, contradicting our standing
assumption that $\chi(S) \leq -2$.

%%% Note that we have already disposed of the case where S is a
%%% four-holed sphere -- in a four-holed sphere if \sigma is large
%%% then it is maximal.

If $v' \subset w - \delta$ then $Q = Q' \cup N(\delta) \cup H$.  We deduce
that $Q$ is not a once-holed rectangle; so $u \neq w$.  Also, the left
side of $\alpha$ is contained in $R \cup Q'$.  However the left side
of $\alpha$ meets the hexagon $H$, near the point $x_R$, giving a
contradiction.

To recap: the arc $\delta \cup \epsilon$ enters $Q'$ at $p' \in u'
\subset \bdy_v Q'$.  The region $Q'$ is not a once-holed bigon; also
$Q' \cap H = \emptyset$.  The component $v' \subset \bdy_v Q'$ coming
before $u'$ is not contained in $w - \delta$.

Let $J_{v'}$ be the tie of $N(\tau)$ containing $v'$.  We orient
$J_{v'}$ using the orientation of $Q'$.  Let $y_R'$ and $y_L'$ be the
two innermost points of $\alpha \cap J_{v'}$, where $y_R'$ comes
before $y_L'$ along $J_{v'}$.  Since $V(x_L', \alpha)$ is large, the
vector $V(y_L', \alpha)$ is small.  

We now have a final pair of subcases.  Either $V(y_R', \alpha)$ is
large, or it is small.

\subsection{The tangent vector to $\alpha$ at $y_R'$ is large.}

Consider the arcs $[p', x_L'] \subset J_{u'}$, $[x_L', y_L'] \subset
\alpha^\op$, $[y_L', y_R'] \subset J_{v'}^\op$, $[y_R', x_R] \subset
\alpha$, and $[x_R, p] \subset J_u$.  Then
\[
\beta = \delta \cup \epsilon \cup [p', x_L'] \cup [x_L', y_L'] \cup
      [y_L', y_R'] \cup [y_R', x_R] \cup [x_R, p]
\]
has $i(\alpha, \beta) = 0$.  (Note that after an arbitrarily small
isotopy the arc $[p', x_L'] \cup [x_L', y_L'] \cup [y_L', y_R']$
becomes carried.)

\subsection{The tangent vector to $\alpha$ at $y_R'$ is small.}
\label{Sec:Final}

In this case we consider the component $w'$ of $\bdy_v Q'$ immediately
before $v'$.  Recall that $Q' \cap H = \emptyset$.  Now, if $w'$ is
the left component of $w - \delta$ then $V(y_R', \alpha)$ being small
implies $V(z_L, \alpha)$ is large, contrary to assumption.  If $w'$ is
the left component of $u - \delta$ then $v'$ is contained in $w$, a
contradicton.

As usual, let $J_{w'}$ be the tie in $N(\tau)$ containing $w'$.  Since
$w'$ is not contained in $u$ or $w$ there are a pair of innermost
points $z_R'$ and $z_L'$ along $J_{w'}$.  Applying
\reflem{ShortDiagonal} there is a short diagonal $\delta'$ in $Q'$
that
\begin{itemize}
\item
connects $p' \in u'$ to a point $q' \in w'$ and
\item
cuts $v'$ off of $Q'$.
\end{itemize}
Note that $v'$ is to the \emph{left} of $\delta'$.    

We give $J_{w'}$ the orientation coming from $\bdy_v Q'$; this agrees
with the orientation given by rotating $V(q', \delta')$ by angle
$\pi/2$, counterclockwise.  We choose names so the points $z_R', q',
z_L'$ come in that order along $J_{w'}$.

Consider the arcs $[q', z_L'] \subset J_{w'}$, $[z_L', x_R] \subset
\alpha$, and $[x_R, p] \subset J_{u}$.  Then
\[
\beta = \delta \cup \epsilon \cup \delta' \cup [q', z_L'] \cup
      [z_L', x_R] \cup [x_R, p]
\]
has $i(\alpha, \beta) = 1$ because $\beta$ crosses, once, from the
right side to the left side of $[z_L', x_R]$.  This is the final case,
and completes the proof.
\end{proof}

\begin{figure}[htbp]
\labellist
\small\hair 2pt
\pinlabel {$Q$} [tl] at 4 92
\pinlabel {$Q'$} [br] at 360 24
\pinlabel {$\delta$} [r] at 31 59
\pinlabel {$p$} [l] at 111 76
\pinlabel {$q$} [Bl] at 111 44
\pinlabel {$\delta'$} [l] at 333 59
\pinlabel {$p'$} [Br] at 254 80
\pinlabel {$q'$} [r] at 254 40
\pinlabel {$\epsilon$} [br] at 179 58
\pinlabel {$x_R = z_L$} [b] at 110 115
\pinlabel {$x_L = z_R$} [t] at 110 -1
\pinlabel {$x'_R = z'_L$} [t] at 254 2
\pinlabel {$x'_L = z'_R$} [b] at 254 114
\endlabellist
\vspace{0.15cm}
\includegraphics[width = 0.8\textwidth]{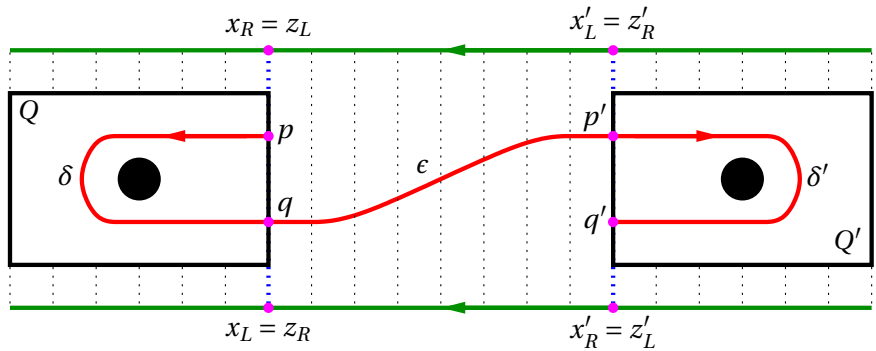}
\vspace{0.1cm}
\caption{One of the four possibilities covered by \refsec{Final}.
  Here $u = w$ and $u' = w'$, so both of $Q$ and $Q'$ are once-holed
  rectangles.}
\label{Fig:DeltaEps}
\end{figure}

%%% In the case shown (u = w and u' = w') there is a much simpler
%%% solution using (\delta')^\op and messing about with endpoints.
%%% But the above construction covers all four cases. 

\bibliographystyle{hyperplain} % My substitute for plain.bst
\bibliography{bibfile}

\def\cprime{$'$} \def\cprime{$'$}
\begin{thebibliography}{1}

\bibitem{GadreTsai11}
Vaibhav Gadre and Chia-Yen Tsai.
\newblock Minimal pseudo-{A}nosov translation lengths on the complex of curves.
\newblock {\em Geom. Topol.}, 15(3):1297--1312, 2011,
  \href{http://arxiv.org/abs/1101.2692}{{arXiv:1101.2692}}.

\bibitem{MasurEtAl12}
Howard Masur, Lee Mosher, and Saul Schleimer.
\newblock On train-track splitting sequences.
\newblock {\em Duke Math. J.}, 161(9):1613--1656, 2012,
  \href{http://arxiv.org/abs/1004.4564v1}{{arXiv:1004.4564v1}}.

\bibitem{MasurMinsky99}
Howard~A. Masur and Yair~N. Minsky.
\newblock Geometry of the complex of curves. {I}. {H}yperbolicity.
\newblock {\em Invent. Math.}, 138(1):103--149, 1999,
  \href{http://arxiv.org/abs/math/9804098v2}{{arXiv:math/9804098v2}}.

\bibitem{MasurMinsky04}
Howard~A. Masur and Yair~N. Minsky.
\newblock Quasiconvexity in the curve complex.
\newblock In {\em In the tradition of {A}hlfors and {B}ers, {III}}, volume 355
  of {\em Contemp. Math.}, pages 309--320. Amer. Math. Soc., Providence, RI,
  2004,  \href{http://arxiv.org/abs/math/0307083v1}{{arXiv:math/0307083v1}}.

\bibitem{Mosher03}
Lee Mosher.
\newblock Train track expansions of measured foliations.
\newblock 2003.\\
\newblock
  \href{http://math.newark.rutgers.edu/~mosher/}{http://math.newark.rutgers.edu/{$\sim$}mosher/}.

\bibitem{Schleimer06b}
Saul Schleimer.
\newblock Notes on the complex of curves.\\
\newblock
  \href{http://www.warwick.ac.uk/~masgar/Maths/notes.pdf}{http://www.warwick.ac.uk/{$\sim$}masgar/Maths/notes.pdf}.

\bibitem{Takarajima00a}
Itaru Takarajima.
\newblock A combinatorial representation of curves using train tracks.
\newblock {\em Topology Appl.}, 106(2):169--198, 2000.

\end{thebibliography}
\end{document}